\documentclass[11pt]{amsart}

\usepackage[T1]{fontenc}
\usepackage{lmodern}
\usepackage{microtype}
\usepackage{amssymb,mathtools}
\usepackage{xcolor}

\definecolor{citationblue}{RGB}{0,68,136}
\usepackage[
  colorlinks=true,
  linkcolor=citationblue,
  citecolor=citationblue,
  urlcolor=citationblue
]{hyperref}

\hypersetup{
  pdftitle={Thirteen Results in Magnitude Theory},
  pdfauthor={Venkata Siddharth Pendyala}
}

\newtheorem{theorem}{Theorem}[section]
\newtheorem*{roffWillertonConjecture}
  {Conjecture 6.2 (Roff--Willerton)}
\newtheorem{corollary}[theorem]{Corollary}

\newcommand{\M}{\mathcal{M}}
\newcommand{\aff}{\operatorname{aff}}
\newcommand{\Mag}{\operatorname{Mag}}

\title[Thirteen Results in Magnitude Theory]
  {Thirteen Results in Magnitude Theory}
\author[V. S. Pendyala]{Venkata Siddharth Pendyala}
\email{venkatasiddharthpendyala@gmail.com}
\subjclass[2020]{Primary 51F99; Secondary 46C20}
\keywords{magnitude, negative type, maximal energy, small-scale limit,
microscopic weighting, monotonicity, Hausdorff continuity,
Lipschitz distance, magnitude dimension}

\begin{document}

\begin{abstract}
We prove thirteen results concerning three aspects of magnitude theory.
The first four concern small-scale magnitude and microscopic
weightings.  We identify the derivative of the magnitude function at
the origin with maximal energy, allowing the value \(+\infty\), and
construct finite spaces for which small-scale magnitude diverges or
for which the one-point property and finite concentration do not
produce a microscopic weighting.  A fifth result disproves the
conjectured monotonicity of magnitude by giving a fourteen-point metric
space of strict negative type whose magnitude function decreases.
The remaining eight results concern continuity and dimension.
Magnitude is Hausdorff-continuous at finite subsets of
finite-dimensional positive definite normed spaces, but need not be
continuous on compact subsets, even under convexity, geodesicity,
bounded tractability, or simultaneous Hausdorff and Lipschitz
convergence.  The same constructions disprove an
\(\ell_1\)-intrinsic-volume formula and two tractability conjectures.
Finally, we construct a countable compact space of negative type whose
Minkowski dimension is zero and whose lower and upper magnitude
dimensions are both infinite.
\end{abstract}

\maketitle

\section{Introduction}

Magnitude is a scale-sensitive notion of the size of a metric space.
It was introduced by Leinster through an analogy between the Euler
characteristic of a finite category and an invariant of a finite metric
space regarded as an enriched category \cite{Leinster}.  For a finite
space, magnitude is calculated from the similarity kernel
\(e^{-d(x,y)}\); heuristically, nearby points are counted as partially
overlapping, while sufficiently separated points make essentially
independent contributions.  Rescaling the metric thus gives a magnitude
function \(t\mapsto |tX|\) that views the same space at different
resolutions.  In the positive definite setting, magnitude also has
variational and capacity formulations
\cite{MeckesPositive,MeckesCapacity}, placing it at an intersection of
category theory, metric geometry, potential theory, and geometric
measure theory \cite{LeinsterMeckes}.

This paper proves thirteen results in magnitude theory, divided among
small-scale behavior, monotonicity, and continuity.  The first four
concern the magnitude function near the origin and the existence of
microscopic weightings.  We prove the Roff--Willerton conjecture \cite[Conjecture~6.2]{RoffWillerton}
identifying the right derivative at zero with maximal energy, in fact
for all compact spaces of negative type and with the value \(+\infty\)
allowed.  We then answer a question of Roff and Yoshinaga \cite[p.~4, Question~(3)]{RoffYoshinaga} by
constructing a finite space whose magnitude tends to \(+\infty\) at
small scales.  Two further finite examples show, respectively, that
finite concentration and the one-point property do not imply the
existence of a microscopic weighting.

The fifth result concerns monotonicity.  Gomi and Meckes asked whether
magnitude is monotone under enlargement of the metric for stably
positive definite spaces, and hence whether the magnitude function of
every space of negative type is increasing \cite[paragraph following Theorem~1.4]{GomiMeckes}.  We give an explicit
fourteen-point metric space of strict negative type whose magnitude
function has negative derivative at some scale.  This complements the
semimetric counterexamples of Gomi and Tsuji \cite[Introduction]{GomiTsuji}, who recorded that metric
counterexamples had been communicated by the present author.

The remaining eight results concern continuity and dimension.  We
prove the Katsumasa--Roff--Yoshinaga continuity conjecture \cite[Conjecture~1.3]{KatsumasaRoffYoshinaga} for finite
subsets of finite-dimensional positive definite normed spaces.  For
compact subsets the situation is different: magnitude can fail to be
Hausdorff-continuous even for convex, and hence geodesic, subsets of
\(\ell_1^3\).  This example also disproves the Leinster--Meckes
\(\ell_1\)-intrinsic-volume conjecture \cite[Conjecture~4.5]{LeinsterMeckes} and yields discontinuity inside
a single compact ambient space and inside a bounded tractable space.
It consequently disproves the compact-subspace part of two conjectures
of Kališnik and Lešnik \cite[Conjectures~5.1 and~5.2]{KalisnikLesnikTractable}.  A separate construction shows that magnitude
may diverge under simultaneous Lipschitz and Hausdorff convergence.
The final example has Minkowski dimension zero but infinite lower and
upper magnitude dimensions.

The common background is classical.  Schoenberg realized metrics of
negative type as squared Hilbert-space distances
\cite{Schoenberg1935,Schoenberg1938}.  Nickolas and Wolf related the
maximal energy of a compact metric space to the radius of a spherical
Schoenberg realization \cite{NickolasWolf}.  Meckes developed magnitude
for positive definite compact spaces and proved its capacity
description \cite{MeckesPositive,MeckesCapacity}.  At the finite level,
Asao and Gomi interpreted magnitude by means of circumspheres
\cite{AsaoGomi}, while Devriendt obtained a Euclidean circumradius model
and used it to study subspaces \cite{Devriendt}.  These results are used
as tools and are cited where they enter.
\section{Preliminaries}

Throughout, \((X,d)\) is a compact metric space, and all Hilbert spaces
are real. Let \(\M(X)\) denote the vector space of finite signed Borel
measures on \(X\). For \(\nu\in\M(X)\), its total mass is \(\nu(X)\)
and its total variation norm is \(\|\nu\|_{\mathrm{TV}}\). We write
\(\delta_x\) for the unit point mass at \(x\).

The \emph{energy} of \(\nu,\eta\in\M(X)\) is
\[
 I(\nu,\eta)
 =\iint_{X\times X}d(x,y)\,d\nu(x)\,d\eta(y),
 \qquad
 I(\nu)=I(\nu,\nu).
\]
These integrals are finite because \(d\) is bounded. The
\emph{maximal energy} of \(X\) is
\begin{equation}\label{eq:maximal-energy}
 M(X)=\sup\{I(\nu):\nu\in\M(X),\ \nu(X)=1\}.
\end{equation}
The supremum is taken over signed measures, not merely probability
measures, and may be infinite.

The space \(X\) is of \emph{negative type} if, for every finite choice
of points \(x_1,\ldots,x_n\in X\) and real numbers
\(a_1,\ldots,a_n\) with \(\sum_i a_i=0\),
\begin{equation}\label{eq:negative-type}
 \sum_{i,j=1}^n a_i a_jd(x_i,x_j)\leq0.
\end{equation}
It is of \emph{strict negative type} if the inequality is strict
whenever the points are distinct and the coefficients are not all
zero.

A \emph{Schoenberg realization} is a continuous map \(v:X\to H\)
into a real Hilbert space such that
\[
 d(x,y)=\|v_x-v_y\|^2.
\]
It is \emph{spherical} if \(\|v_x-c\|\) is independent of \(x\) for
some \(c\in H\), and \emph{centered} if
\[
 0\in\overline{\aff}\,v(X).
\]
Here \(\aff A\) is the algebraic affine hull of \(A\), and the bar
denotes norm closure.

For \(t\geq0\), let \(tX\) denote \(X\) with metric \(td\), and put
\[
 \mathcal E_t(\nu,\eta)
 =\iint_{X\times X}e^{-td(x,y)}\,d\nu(x)\,d\eta(y),
 \qquad
 \mathcal E_t(\nu)=\mathcal E_t(\nu,\nu).
\]
A real symmetric kernel \(K\) is \emph{positive definite} if
\[
 \sum_{i,j=1}^n a_i a_jK(x_i,x_j)\geq0
\]
for every finite choice of points and real coefficients. A metric
space \(X\) is \emph{positive definite} if, for every finite set of
distinct points \(x_1,\ldots,x_n\in X\), the matrix
\[
 \bigl(e^{-d(x_i,x_j)}\bigr)_{i,j=1}^n
\]
is positive definite. A normed space is positive definite when its
induced metric is positive definite. Schoenberg's theorem identifies
negative type with the positive definiteness of \(e^{-td}\) for every
\(t>0\) \cite{Schoenberg1938}.

For a finite metric space \(F=\{x_1,\ldots,x_n\}\), write
\[
 Z_F(t)=\bigl(e^{-td(x_i,x_j)}\bigr)_{i,j=1}^n
\]
for the \emph{similarity matrix} of \(tF\), and let
\(\mathbf1\in\mathbb R^n\) denote the all-ones column vector. A
\emph{weighting} on \(tF\) is a vector \(w\in\mathbb R^n\) satisfying
\[
 Z_F(t)w=\mathbf1.
\]
When \(Z_F(t)\) is invertible, the weighting is unique and is given by
\[
 w_F(t)=Z_F(t)^{-1}\mathbf1.
\]
Its coordinate sum is the magnitude:
\[
 |tF|
 =\Mag(tF)
 =\mathbf1^Tw_F(t)
 =\mathbf1^TZ_F(t)^{-1}\mathbf1.
\]
More generally, whenever a weighting exists, its coordinate sum is
independent of the chosen weighting and defines \(|tF|\).

For a compact space of negative type, we use Meckes's capacity formula
\begin{equation}\label{eq:magnitude-capacity}
 \Mag(tX)
 =
 \left(
  \inf_{\substack{\nu\in\M(X)\\ \nu(X)=1}}
  \mathcal E_t(\nu)
 \right)^{-1},
\end{equation}
with \(1/0=+\infty\)
\cite{MeckesPositive,MeckesCapacity}. We use
\[
 |tX|=\Mag(tX)
\]
interchangeably, and in particular write \(|X|=\Mag(X)\). At \(t=0\)
the infimum in \eqref{eq:magnitude-capacity} equals one, and we put
\[
 |0X|=\Mag(0X)=1.
\]

Roff and Willerton use the term \emph{magnitude function} for
\cite[Definition~6.1]{RoffWillerton}
\begin{equation}\label{eq:RW-magnitude-function}
 \mu_X(t)
 =
 \sup\{\Mag(tF):F\subseteq X\text{ is finite}\},
 \qquad t>0.
\end{equation}
For compact spaces of negative type this agrees with
\eqref{eq:magnitude-capacity}. We set \(\mu_X(0)=1\); throughout,
\(\mu_X'(0)\) denotes the extended right derivative
\(\lim_{t\downarrow0}(\mu_X(t)-1)/t\), which may equal \(+\infty\),
and no continuity at zero is assumed. A metric space \(X\)
has the \emph{one-point property} if
\[
 \lim_{t\downarrow0}|tX|=1.
\]

For a real Hilbert space \(H\), let
\[
 \mathcal F_s(H)
 =
 \bigoplus_{m=0}^{\infty}H^{\odot m}
\]
denote its symmetric Fock space, where \(H^{\odot m}\) is the
\(m\)-fold symmetric Hilbert tensor power and
\(H^{\odot0}=\mathbb R\). Its \emph{vacuum vector} is
\[
 \Omega=(1,0,0,\ldots).
\]

For pointed metric spaces \((X,x_0)\) and \((Y,y_0)\), their
\emph{metric wedge} \(X\vee Y\) is obtained by identifying \(x_0\)
with \(y_0\), retaining the original metrics on the two branches, and
putting
\[
 d(x,y)=d_X(x,x_0)+d_Y(y_0,y)
 \qquad(x\in X,\ y\in Y).
\]
The same definition applies to an arbitrary family of pointed spaces,
whose wedge is denoted by \(\bigvee_iX_i\). Their
\(\ell_1\)-product \(X\times_1Y\) is the Cartesian product equipped
with
\[
 d\bigl((x,y),(x',y')\bigr)
 =
 d_X(x,x')+d_Y(y,y').
\]
We write \(X^{\times_1m}\) for the \(m\)-fold \(\ell_1\)-product.

A subset \(K\subseteq\ell_1^n\) is \emph{\(\ell_1\)-convex} if it is
geodesic in its induced \(\ell_1\)-metric: every two points of \(K\)
are joined in \(K\) by a geodesic of length equal to their
\(\ell_1\)-distance. In particular, every ordinarily convex subset of
\(\ell_1^n\) is \(\ell_1\)-convex.

For a nonempty compact convex set \(K\subseteq\ell_1^n\), its
\emph{\(\ell_1\)-intrinsic volumes} are defined by
\[
 V_0'(K)=1,
 \qquad
 V_i'(K)
 =
 \sum_{\substack{I\subseteq\{1,\ldots,n\}\\ |I|=i}}
 \operatorname{vol}_i(\pi_IK),
 \qquad 1\leq i\leq n,
\]
where \(\pi_I:\mathbb R^n\to\mathbb R^I\) is the corresponding
coordinate projection and \(\operatorname{vol}_i\) is
\(i\)-dimensional Lebesgue measure
\cite[Section~4]{LeinsterMeckes}.

\section{Small-scale magnitude and microscopic weightings}

Roff and Willerton state the following conjecture
\cite[Conjecture~6.2]{RoffWillerton}.

\begin{roffWillertonConjecture}
For every compact metric space $Y$ of strictly negative type, we have
\[
  \mu_Y'(0)=M(Y).
\]
\end{roffWillertonConjecture}

\begin{theorem}[First variation of magnitude]\label{thm:first-variation}
For every compact metric space $Y$ of negative type,
\[
  \mu_Y'(0)=M(Y),
\]
where the value $+\infty$ is allowed.  In particular,
Conjecture~6.2 is true.
\end{theorem}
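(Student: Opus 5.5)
We will work directly with the capacity formula \eqref{eq:magnitude-capacity}. Put \(m(t)=\inf\{\mathcal E_t(\nu):\nu(X)=1\}\), so that \(\mu_Y(t)=1/m(t)\). Since \(m(0)=1\), it suffices to show
\[
 \lim_{t\downarrow0}\frac{1-m(t)}{t}=M(Y),
\]
as an extended limit. Then \((\mu_Y(t)-1)/t=(1-m(t))/(t\,m(t))\) has the same limit: if \(m(t)\to1\), this is immediate, and if the quotient diverges, one checks the two divergences are compatible. Indeed \(m(t)\le1\) always, and \(m(t)\to0\) already forces \(\mu_Y(t)/t\to\infty\). So the whole proof reduces to establishing a lower bound and an upper bound for the quotient \((1-m(t))/t\).

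\textbf{Lower bound.} Fix \(\nu\) with \(\nu(Y)=1\). Expand \(e^{-td}=1-td+R_t\) with \(0\le R_t\le t^2d^2/2\). Since \(\nu\) has total mass one,
\[
 \mathcal E_t(\nu)=1-tI(\nu)+O\bigl(t^2\|\nu\|_{\mathrm{TV}}^2\operatorname{diam}^2\bigr),
\]
and hence
\[
 \liminf_{t\downarrow0}\frac{1-m(t)}{t}\ge I(\nu).
\]
Taking the supremum over \(\nu\) gives \(\liminf\ge M(Y)\), including the case \(M(Y)=+\infty\). Together with the reduction above, this settles \(\mu_Y'(0)\ge M(Y)\).

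\textbf{Upper bound.} This is the main obstacle, because the near-optimal measures \(\nu_t\) may have total variation growing as \(t\to0\), so the Taylor remainder cannot be controlled naively. We may assume \(M(Y)<\infty\). The plan is to use negative type through a Schoenberg realization \(v:Y\to H\), so that \(d(x,y)=\|v_x-v_y\|^2\). For mass-one \(\nu\), write \(b=\int v\,d\nu\). Then
\[
 I(\nu)=2\int\|v_x\|^2\,d\nu-2\|b\|^2,
\]
and maximal energy corresponds, as in Nickolas--Wolf, to a spherical realization: there is a center \(c\) with \(\|v_x-c\|^2=M/2\) for all \(x\). With this center, \(I(\nu)=M-2\bigl\|\int(v-c)\,d\nu\bigr\|^2\le M\) for every mass-one \(\nu\). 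Finiteness of \(M\) should give this spherical, centered structure.

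Next we write the kernel through the Fock space:
\[
 e^{-t\|v_x-v_y\|^2}=e^{-tM}e^{2t\langle v_x-c,\,v_y-c\rangle}.
\]
Since \(e^{2t\langle u,w\rangle}=\langle \operatorname{Exp}(\sqrt{2t}\,u),\operatorname{Exp}(\sqrt{2t}\,w)\rangle\), with exponential vectors in \(\mathcal F_s(H)\), this yields
\[
 \mathcal E_t(\nu)=e^{-tM}\Bigl\|\int\operatorname{Exp}\bigl(\sqrt{2t}\,(v_x-c)\bigr)\,d\nu(x)\Bigr\|^2
 \ge e^{-tM}\bigl\langle\Omega,\textstyle\int\operatorname{Exp}\,d\nu\bigr\rangle^2 .
\]
The vacuum component of every exponential vector is \(1\), so the right-hand side equals \(e^{-tM}\nu(Y)^2=e^{-tM}\). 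Hence \(m(t)\ge e^{-tM}\) and
\[
 \frac{1-m(t)}{t}\le\frac{1-e^{-tM}}{t}\to M .
\]
This gives \(\limsup\le M(Y)\).

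The delicate point is justifying the spherical realization with center \(c\), and in particular that \(v(Y)\) is bounded on a sphere when \(M<\infty\) in infinite dimensions. This is the step I would take from Nickolas--Wolf. If the exact center fails to exist, I would instead use approximate centers \(c_\varepsilon\) in the closed affine hull, which gives \(m(t)\ge e^{-t(M+\varepsilon)}\). The equivalent form of the identity \(I(\nu)\le M\) needed is \(\sup_x\|v_x-c\|^2\le M/2\) together with orthogonality of \(v-c\) to the relevant affine directions.
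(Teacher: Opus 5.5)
Your proposal is correct and follows essentially the paper's proof. Both arguments use the Nickolas--Wolf centered spherical realization with $\|v_y-c\|^2=M(Y)/2$, and the factorization $e^{-td}=e^{-tM(Y)}e^{2t\langle v_y-c,v_z-c\rangle}$ to get $\mu_Y(t)\le e^{tM(Y)}$; your vacuum-component bound in Fock space is the same fact as the paper's positive definiteness of $e^{2t\langle v_y,v_z\rangle}-1$, with Bochner integration replacing the paper's partition approximation for general signed measures. Your lower bound, which takes near-maximizing measures straight from the definition of $M(Y)$ and applies a fixed-measure Taylor expansion, is what the paper does only when $M(Y)=+\infty$; for finite $M(Y)$ the paper builds finitely supported near-maximizers from centeredness, which your version shows is unnecessary.
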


\begin{proof}
Suppose first that $M(Y)<\infty$.  The spherical normalization used
below is due to Nickolas and Wolf \cite[Theorem~4.8]{NickolasWolf}.
It gives a real Hilbert space $H$ and a centered spherical Schoenberg
realization $v:Y\to H$ such that
\begin{equation}\label{eq:centered-sphere}
  d(y,z)=\|v_y-v_z\|^2,
  \qquad
  \|v_y\|^2=\frac{M(Y)}2.
\end{equation}
Since all the points $v_y$ lie on the same sphere,
\begin{equation}\label{eq:spherical-distance}
  d(y,z)=M(Y)-2\langle v_y,v_z\rangle.
\end{equation}

Let $\nu\in\M(Y)$ have mass one, and write
\[
  b(\nu)=\int_Yv_y\,d\nu(y).
\]
The integral exists because $v$ is continuous and $Y$ is compact.
Integrating \eqref{eq:spherical-distance} twice gives the elementary
identity
\begin{equation}\label{eq:energy-center}
  I(\nu)=M(Y)-2\|b(\nu)\|^2.
\end{equation}
This identity contains the first-order part of the argument.

We next obtain an estimate which is uniform in $\nu$.  For $t\geq0$ set
\[
  L_t(y,z)=e^{2t\langle v_y,v_z\rangle}-1.
\]
The kernel $L_t$ is positive definite.  Indeed, the matrix
$(\langle v_{y_i},v_{y_j}\rangle)_{i,j}$ is positive semidefinite for
every finite set of points.  Entrywise products of positive
semidefinite matrices are positive semidefinite, and the power series
\[
  L_t(y,z)=\sum_{k=1}^{\infty}
     \frac{(2t)^k}{k!}\langle v_y,v_z\rangle^k
\]
has nonnegative coefficients.  It follows first for finitely supported
signed measures.  To pass to an arbitrary $\nu\in\M(Y)$, choose finite
Borel partitions of $Y$ whose mesh tends to zero and replace the
restriction of $\nu$ to each part by a point mass having the same
mass.  The resulting atomic measures have total variation at most
$\|\nu\|_{\mathrm{TV}}$, and the uniform continuity of $L_t$ on the
compact space $Y\times Y$ gives convergence of the corresponding
double integrals.  Hence
\begin{equation}\label{eq:positive-remainder}
  \iint_{Y\times Y}L_t(y,z)\,d\nu(y)\,d\nu(z)\geq0.
\end{equation}

If $\nu(Y)=1$, equations \eqref{eq:spherical-distance} and
\eqref{eq:positive-remainder} give
\begin{align}
  \mathcal E_t(\nu)
  &=e^{-tM(Y)}
    \iint_{Y\times Y}e^{2t\langle v_y,v_z\rangle}
       \,d\nu(y)\,d\nu(z) \notag\\
  &\geq e^{-tM(Y)}.                         \label{eq:energy-lower-bound}
\end{align}
Taking the infimum in \eqref{eq:magnitude-capacity}, we obtain
\begin{equation}\label{eq:magnitude-upper-bound}
  \mu_Y(t)\leq e^{tM(Y)}.
\end{equation}
Consequently,
\begin{equation}\label{eq:derivative-upper-bound}
  \limsup_{t\downarrow0}
  \frac{\mu_Y(t)-1}{t}\leq M(Y).
\end{equation}

It remains to prove the opposite inequality.  Since the realization is
centered, for every $\varepsilon>0$ there are points
$y_1,\ldots,y_n\in Y$ and real numbers $a_1,\ldots,a_n$ such that
\[
  \sum_{i=1}^n a_i=1,
  \qquad
  \left\|\sum_{i=1}^na_iv_{y_i}\right\|<\varepsilon.
\]
For the signed measure
$\nu_\varepsilon=\sum_i a_i\delta_{y_i}$,
\eqref{eq:energy-center} yields
\begin{equation}\label{eq:almost-maximal-energy}
  I(\nu_\varepsilon)>M(Y)-2\varepsilon^2.
\end{equation}
The measure $\nu_\varepsilon$ is now fixed.  Since $d$ is bounded, the
Taylor expansion of $e^{-td(y,z)}$ is uniform on $Y\times Y$; hence
\begin{equation}\label{eq:fixed-measure-expansion}
  \mathcal E_t(\nu_\varepsilon)
  =1-tI(\nu_\varepsilon)+O_{\nu_\varepsilon}(t^2).
\end{equation}
The capacity formula gives
$\mu_Y(t)^{-1}\leq\mathcal E_t(\nu_\varepsilon)$.  Taking reciprocals
in \eqref{eq:fixed-measure-expansion}, for sufficiently small $t>0$,
we find
\[
  \mu_Y(t)
  \geq1+tI(\nu_\varepsilon)+O_{\nu_\varepsilon}(t^2).
\]
Together with \eqref{eq:almost-maximal-energy}, this implies
\[
  \liminf_{t\downarrow0}
  \frac{\mu_Y(t)-1}{t}
  \geq M(Y)-2\varepsilon^2.
\]
Letting $\varepsilon\downarrow0$ and comparing with
\eqref{eq:derivative-upper-bound} proves the theorem when $M(Y)$ is
finite.

Suppose finally that $M(Y)=+\infty$.  Given $A>0$, the definition of
maximal energy supplies a mass-one measure $\nu$ such that $I(\nu)>A$.
For this fixed measure the same uniform Taylor expansion gives
\[
  \mathcal E_t(\nu)=1-tI(\nu)+O_\nu(t^2).
\]
Since $\mu_Y(t)^{-1}\leq\mathcal E_t(\nu)$, it follows that
\[
  \liminf_{t\downarrow0}
  \frac{\mu_Y(t)-1}{t}\geq I(\nu)>A.
\]
The number $A$ is arbitrary, so the right derivative is $+\infty$.
\end{proof}

Roff and Yoshinaga ask
\cite[p.~4, Question~(3)]{RoffYoshinaga}:
\begin{quote}
\emph{Does there exist a finite metric space \(X\) with
\(\lim_{t\to0}|tX|=\infty\)?}
\end{quote}

Kamiyama introduced the cyclic two-chunk family used below, computed
its magnitude, and analyzed its finite small-scale limits
\cite[Proposition~3.2 and equations~(5.49)--(5.52)]{Kamiyama}.
His calculation identifies a critical parameter at which the
denominator in the resulting second-order small-scale formula
vanishes.  Kamiyama records the behavior as the parameter approaches
this critical value.  We instead fix the parameter at that value and
determine the behavior as the scale tends to zero.

\begin{theorem}[Infinite small-scale magnitude]
\label{thm:infinite-small-scale}
There exists a twelve-point finite metric space \(X\) such that
\[
    \lim_{t\downarrow0}|tX|=+\infty.
\]
Consequently, Question~(3) of Roff and Yoshinaga has an affirmative
answer.
\end{theorem}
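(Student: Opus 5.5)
We construct \(X\) from Kamiyama's cyclic two-chunk family \cite[Proposition~3.2]{Kamiyama}, with the free distance parameter fixed at the critical value where the denominator of his second-order small-scale formula vanishes. Concretely, take two chunks of six points each, arranged cyclically. Within the family, distances depend only on a small number of orbit types under a symmetry group \(G\) (a cyclic or dihedral group together with the chunk swap), and this \(G\) acts transitively on the twelve points. The first task is to check that the critical parameter yields a genuine metric, meaning the triangle inequality holds for each distance type. This is a finite list of linear inequalities, which we verify directly.

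The computation rests on transitivity. Since \(G\) acts transitively, the weighting on \(tX\) is constant wherever \(Z_X(t)\) is invertible. Then \(Z_X(t)\mathbf1=r(t)\mathbf1\) with \(r(t)=\sum_{y}e^{-td(x,y)}\), and so \(|tX|=12/r(t)\). However, \(r(t)\to12\), so this naive formula tends to \(1\) and would contradict the theorem. Resolving this is the key point. As \(t\downarrow0\), \(Z_X(t)\) degenerates toward the all-ones matrix, and the weighting \(Z^{-1}\mathbf1\) is well defined only when \(Z\) is invertible. At the critical parameter \(Z_X(t)\) may be singular at isolated \(t\). More to the point, in Kamiyama's family the space is \emph{not} homogeneous: the two chunks are inequivalent, which is why his formula has a genuine denominator.

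So the plan changes. We use the symmetry only to reduce \(Z_X(t)^{-1}\mathbf1\) to a small system. With one weight per chunk type, the weighting equations become a \(2\times2\) (or \(3\times3\)) linear system \(A(t)w=\mathbf1\), whose entries are sums of exponentials. Cramer's rule then gives \(|tX|=N(t)/D(t)\), where \(N\) and \(D\) are exponential polynomials in \(t\). We expand both in powers of \(t\). The generic leading orders, say \(N\sim n_kt^k\) and \(D\sim d_kt^k\), produce Kamiyama's finite limit \(n_k/d_k\). At the critical parameter \(d_k=0\) while \(n_k\neq0\). Provided \(d_{k+1}\neq0\), this gives \(|tX|\sim (n_k/d_{k+1})\,t^{-1}\). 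The limit is therefore \(\pm\infty\), and we must show its sign is positive.

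We must also make sure \(|tX|\) is defined for all small \(t>0\). This requires \(D(t)\neq0\) on some interval \((0,t_0)\), which follows because \(D\) is a nonzero analytic function with isolated zeros. Alternatively, if \(Z_X(t)\) is singular at some small \(t\), a weighting still exists whenever \(\mathbf1\) lies in its range, and we handle that case separately.

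The main obstacle is the expansion. We need to identify the order \(k\) and verify both that \(d_{k+1}\neq0\) and that \(\operatorname{sign}(n_k/d_{k+1})>0\), with the relevant derivatives coming from exact rational arithmetic in the distance values. Our first approach is to take Kamiyama's equations~(5.49)--(5.52) as given and substitute the critical parameter into the next-order coefficient. We then confirm positivity from \(|tX|>0\) for small \(t\), should \(X\) turn out to be of negative type there. Otherwise we check the sign directly by exact evaluation of the Taylor coefficients.
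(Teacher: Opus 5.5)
Your strategy is the paper's: Kamiyama's twelve-point cyclic two-chunk space at the critical parameter, reduction by the cyclic symmetry to a $2\times2$ system with one weight per block, Cramer's rule, and one extra order of Taylor expansion in the denominator. Your handling of well-definedness is also fine: whenever the $2\times2$ determinant is nonzero, the block-constant solution is a weighting, and its coordinate sum is the magnitude whether or not $Z_X(t)$ is invertible.

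\textbf{The gap.} You never fix the space or carry out the expansion, and for an existence statement like this those computations are the proof. As written, ``$d_k=0$ while $n_k\neq0$'', ``provided $d_{k+1}\neq0$'', and the sign are all assumed.

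Concretely, take $d(a_i,a_j)=r$ and $d(b_i,b_j)=1$, with $d(a_i,b_j)=(r+5)/4$ for $j-i\in\{0,1\}$ and $r/2$ otherwise. Then $|tX_r|=6(A+B-2C)/(AB-C^2)$, where $A=1+5e^{-rt}$, $B=1+5e^{-t}$, $C=2e^{-(r+5)t/4}+4e^{-rt/2}$.

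\begin{itemize}
\item Expanding, the numerator is $\tfrac34(11r^2-10r-5)t^2+O(t^3)$ and the denominator is $(2r^2+5r-10)t^2+O(t^3)$. So the critical value is $r_0=(\sqrt{105}-5)/4\approx1.312$.
\item Reducing with $2r_0^2=10-5r_0$, the numerator becomes $\tfrac{75}{8}(4-3r_0)t^2+O(t^3)$ and the denominator becomes $\tfrac{5}{32}(29r_0-22)t^3+O(t^4)$.
\item Both leading coefficients are positive because $1<r_0<4/3$. Hence $|tX_{r_0}|\sim 60\,\frac{4-3r_0}{29r_0-22}\,t^{-1}\to+\infty$.
\item For $1\le r\le5$ the triangle inequality reduces to $(5-r)/4\le\min\{1,r\}$, $r\ge1$, $(r+5)/2\ge r$, and $(3r+5)/4\ge r$.
\end{itemize}

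\textbf{Why none of this is automatic.} The numerator's quadratic coefficient vanishes at $r=(5+4\sqrt5)/11\approx1.268$, very close to $r_0$, and $4-3r_0\approx0.065$ is small. So $n_2\neq0$ is a genuine check, not a generic fact.

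\textbf{Your sign shortcut is unavailable.} You propose to get the sign from $|tX|>0$ if $X$ is of negative type, but $X_{r_0}$ is not of negative type. Use the basis $(\omega^i)_i$ on the $a$-block and $(\omega^j)_j$ on the $b$-block, with $\omega=e^{\pi i/3}$. In it, the distance matrix acts on this invariant subspace of $\mathbf 1^\perp$ by
\[
\begin{pmatrix}-r&\frac{5-r}{4}(1+\omega)\\ \frac{5-r}{4}(1+\bar\omega)&-1\end{pmatrix}.
\]
Its determinant is $r-3(5-r)^2/16$, which is negative at $r_0$, so there is a zero-mass direction of positive energy. It is no surprise that $X_{r_0}$ cannot even be of strict negative type. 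A finite space of strict negative type has finite maximal energy, so $|tX|\le e^{tM(X)}$ by the proof of Theorem~\ref{thm:first-variation}, which rules out blow-up.

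Only your fallback, exact evaluation of the Taylor coefficients at $r_0$, can settle the sign. That evaluation is exactly the step missing from the proposal.
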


\begin{proof}
For \(1\leq r\leq5\), consider Kamiyama's twelve-point cyclic
two-chunk space
\[
    X_r=\{a_i,b_i:i\in\mathbb Z/6\mathbb Z\},
\]
with subscripts read modulo \(6\), whose nonzero distances are
\[
    d(a_i,a_j)=r,\qquad d(b_i,b_j)=1
    \quad (i\neq j),
\]
and
\[
    d(a_i,b_j)=
    \begin{cases}
        (r+5)/4,&j-i\in\{0,1\},\\[2mm]
        r/2,&j-i\notin\{0,1\}.
    \end{cases}
\]
Kamiyama verifies that these parameters define a metric for
\(1\leq r\leq5\).  For completeness, if
\[
    \gamma=\frac{r+5}{4},
    \qquad
    \delta=\frac r2,
\]
then
\[
    |\gamma-\delta|
      =\frac{5-r}{4}\leq\min\{1,r\},
    \qquad
    2\delta=r\geq1,
\]
while
\[
    2\gamma=\frac{r+5}{2}\geq r,
    \qquad
    \gamma+\delta=\frac{3r+5}{4}\geq r.
\]
These are precisely the nontrivial mixed triangle inequalities.

Set
\[
    A_r(t)=1+5e^{-rt},\qquad
    B(t)=1+5e^{-t},
\]
and
\[
    C_r(t)=2e^{-(r+5)t/4}+4e^{-rt/2}.
\]
Kamiyama's magnitude formula specializes to
\begin{equation}\label{eq:cyclic-magnitude}
    |tX_r|=\frac{N_r(t)}{E_r(t)},
\end{equation}
where
\[
    N_r(t)=6\bigl(A_r(t)+B(t)-2C_r(t)\bigr),
    \qquad
    E_r(t)=A_r(t)B(t)-C_r(t)^2.
\]

Now take the critical value
\[
    r_0=\frac{\sqrt{105}-5}{4}.
\]
At this value, the quadratic coefficient of \(E_{r_0}\) vanishes.
Expansion at \(t=0\), carried one order further in the denominator,
gives
\[
    N_{r_0}(t)
      =\frac{75}{8}(4-3r_0)t^2+O(t^3)
\]
and
\[
    E_{r_0}(t)
      =\frac{5}{32}(29r_0-22)t^3+O(t^4).
\]
Since \(1<r_0<4/3\), both leading coefficients are positive.  In
particular,
\[
    E_{r_0}(t)>0
\]
for all sufficiently small \(t>0\), so
\eqref{eq:cyclic-magnitude} applies there.  Dividing the two
expansions yields
\begin{equation}\label{eq:critical-asymptotic}
    |tX_{r_0}|
      =
      60\,\frac{4-3r_0}{29r_0-22}\,\frac1t+O(1).
\end{equation}
The coefficient of \(t^{-1}\) is positive.  Therefore
\[
    \lim_{t\downarrow0}|tX_{r_0}|=+\infty.
\]
Taking \(X=X_{r_0}\) proves the theorem.
\end{proof}

For a finite metric space \(X=\{x_1,\ldots,x_n\}\), let
\(D_X=(d(x_i,x_j))_{i,j}\) be its distance matrix.  Following Roff
and Willerton, a \emph{gauging} for \(D_X\) is a vector
\(g\in\mathbb R^n\) such that
\[
    \mathbf 1^Tg=1,
    \qquad
    D_Xg=c\mathbf 1
\]
for some \(c\in\mathbb R\).  In this case \(D_X\) has
\emph{finite concentration}.  If
\[
    w_X(t)=Z_X(t)^{-1}\mathbf 1
\]
for sufficiently small \(t>0\), then a \emph{microscopic weighting}
on \(X\) is the componentwise limit
\[
    \widehat w_X=\lim_{t\downarrow0}w_X(t),
\]
provided that this limit exists
\cite[Definitions~1.1, 2.3, and~2.4]{RoffWillerton}.

Roff and Willerton conjecture
\cite[Conjecture~3.3]{RoffWillerton}:
\begin{quote}
\emph{A finite metric space \(X\) has a microscopic weighting if and
only if its distance matrix satisfies
\(\operatorname{con}(D)\neq\infty\).}
\end{quote}

\begin{theorem}[Finite concentration without a microscopic weighting]
\label{thm:finite-concentration-counterexample}
There exists a twenty-four-point metric space \(Y\) whose distance
matrix has finite concentration but which does not admit a microscopic
weighting.  Consequently, Conjecture~3.3 of Roff and Willerton is
false.
\end{theorem}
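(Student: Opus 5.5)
I would build \(Y\) from the twelve-point space \(X=X_{r_0}\) of Theorem~\ref{thm:infinite-small-scale}, using two copies. The natural candidate is the metric wedge or, better, a disjoint union of two copies of \(X_{r_0}\) at a large mutual distance \(D\) (all cross distances equal to \(D\)), giving twenty-four points. The idea is that \(|tX_{r_0}|\sim c/t\) forces the weighting \(w_X(t)\) to blow up like \(1/t\), so no microscopic weighting can exist. The extra structure is then needed only to restore finite concentration, which \(X_{r_0}\) alone presumably lacks.

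The steps, in order, are as follows.

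\emph{Step 1.} Show directly that \(X_{r_0}\) has no microscopic weighting. By cyclic symmetry, \(w_X(t)\) takes one value \(\alpha(t)\) on the \(a_i\) and one value \(\beta(t)\) on the \(b_i\). Also \(6(\alpha+\beta)=|tX_{r_0}|\to\infty\), so at least one coordinate diverges. Concretely, solve the \(2\times2\) system
\[
\begin{pmatrix}A_{r_0}&C_{r_0}\\ C_{r_0}&B\end{pmatrix}\begin{pmatrix}\alpha\\ \beta\end{pmatrix}=\begin{pmatrix}1\\1\end{pmatrix},
\]
whose determinant is \(E_{r_0}=O(t^3)\), while the numerators are \(O(t^2)\).

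\emph{Step 2.} Compute the distance matrix of the cyclic space and decide its concentration. With the symmetric ansatz \(g=(x,\dots,x,y,\dots,y)\), the condition \(D g=c\mathbf1\) becomes a \(2\times2\) linear system \(\begin{pmatrix}5r_0&r_0+5+2r_0\\ r_0+5+2r_0&5\end{pmatrix}\), with entries obtained by summing each row block. If that system together with \(6x+6y=1\) is solvable, then \(X_{r_0}\) itself already has finite concentration. In that case I would expect the paper's twenty-four points to come from a perturbed construction instead, for instance another Kamiyama parameter pair with twelve \(a\)'s and twelve \(b\)'s in \(\mathbb Z/12\). If the system is not solvable, I would glue two copies so as to enlarge the solution space. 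For a disjoint union at distance \(D\), the distance matrix is \(\begin{pmatrix}D_X&DJ\\DJ&D_X\end{pmatrix}\), and a gauging can be sought of the form \((g_1,g_2)\) with \(D_Xg_1+D(\mathbf1^Tg_2)\mathbf1=c\mathbf1\). Here it suffices to have \(D_Xg_1\in\operatorname{span}\mathbf1\) with some prescribed mass, and the cross term supplies the needed freedom even when \(\mathbf1\notin\operatorname{range}D_X\) at unit mass, provided \(D_X\) has a kernel vector of nonzero mass or similar.

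\emph{Step 3.} Transfer the failure of the microscopic weighting. For a union of two copies at distance \(D\), block symmetry gives \(w_Y(t)=(u,u)\) with \((Z_X+e^{-tD}J)u=\mathbf1\). Sherman--Morrison then yields \(u=w_X/(1+e^{-tD}|tX|)\), which tends to \(\lim w_X/|tX|\), a finite limit. So this naive gluing might actually create a microscopic weighting, and the gluing must be chosen more carefully.

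The main obstacle is exactly this balancing act. Adding points to gain finite concentration tends to regularize the weighting, as the Sherman--Morrison computation shows. I would therefore first try to find a twenty-four-point symmetric space, such as a \(\mathbb Z/12\) cyclic two-chunk space, in which the weighting blows up, for instance like \(t^{-1/2}\) or with oscillating sign. Then I would check finite concentration by the block-symmetric \(2\times2\) computation.
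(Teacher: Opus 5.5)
There is a genuine gap. The proposal never exhibits a space that has both properties, and both gluings it names fail on the concentration side.

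\textbf{Step 2 can be settled, against you.} The cross row sum is \(2\cdot\frac{r_0+5}{4}+4\cdot\frac{r_0}{2}=\frac{5(r_0+1)}{2}\), not \(3r_0+5\). Average a putative gauging over the cyclic isometries; it becomes block-constant, \((x\mathbf 1,y\mathbf 1)\). Equality of the \(a\)- and \(b\)-rows of \(D_X(x\mathbf 1,y\mathbf 1)\) then reads \(\frac{5(r_0-1)}{2}(x+y)=0\). So the mass is \(0\), and \(X_{r_0}\) has no gauging.

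Hence every \(g\) with \(D_Xg\in\operatorname{span}\mathbf 1\) has mass zero, and gluing two copies cannot help:
\begin{itemize}
\item \emph{Constant-distance union.} The block form forces \(D_Xg_1,D_Xg_2\in\operatorname{span}\mathbf 1\), so \(\mathbf 1^Tg_1=\mathbf 1^Tg_2=0\).
\item \emph{Wedge at \(o\).} Using \(d(x,y)=d(x,o)+d(o,y)\), the first-branch condition is \(D_X\bigl(g_1+(\mathbf 1^Tg_2)\delta_o\bigr)\in\operatorname{span}\mathbf 1\), so \(\mathbf 1^Tg_1+\mathbf 1^Tg_2=0\).
\end{itemize}

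\textbf{Step 3's diagnosis rests on a wrong order count in Step 1.} The numerators \(B-C\) and \(A-C\) are \(\pm\frac{5(r_0-1)}{2}t+O(t^2)\), not \(O(t^2)\). So the block weights of \(X_{r_0}\) are of exact order \(t^{-2}\); they cancel to leading order only in \(\alpha+\beta\). Thus \(w_X/|tX|\) still diverges, and your Sherman--Morrison formula shows the union does \emph{not} regularize the weighting. Its actual defect is the missing gauging.

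\textbf{The \(\mathbb Z/12\) fallback is a search plan, not an argument, and it is unpromising.} Take a cyclic two-chunk space whose reduced \(2\times2\) distance matrix \(D_2\) is invertible, and put \(f=(1,-1)^T\). A gauging exists iff \(f^TD_2f\neq0\). In that case the reduced similarity determinant vanishes only to first order in \(t\), so the block weights converge.

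\textbf{The missing idea is already in your block computation.} The vector \(p=(\mathbf 1_a,-\mathbf 1_b)\) has mass zero and \(D_Xp=\frac{5(r-1)}{2}\mathbf 1\). A mass-zero preimage of \(\mathbf 1\) is exactly what a single cone point turns into a gauging. If \(D_Wp=\mathbf 1\), \(\mathbf 1^Tp=0\), and \(*\) sits at distance \(s\) from all of \(W\), then \(g=(-sp,1)\) satisfies \(\mathbf 1^Tg=1\) and \(D_Yg=0\).

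The price is that the weighting on \(W\) is multiplied by \(\frac{1-q}{1-q^2|tW|}\), with \(q=e^{-st}\). This factor has a nonzero limit only when \(|tW|=1+at+O(t^2)\). For \(W=X_{r_0}\), with \(|tX_{r_0}|\sim\kappa/t\), the factor is asymptotic to \(-\frac{s}{\kappa}t^2\). That exactly cancels the \(t^{-2}\) blow-up, so the cone over \(X_{r_0}\) \emph{has} a microscopic weighting. \(X_{r_0}\) is the wrong base even for the right gluing.

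\textbf{What the paper does instead.} It builds a base with divergent weights and the one-point property:
\begin{itemize}
\item \(P=X(r,1,\frac{r+5}{4},\frac r2)\) with \(11r^2-10r-5=0\). Here \(|tP|=O(t)\), while the \(a\)-weights are \(\sim-\frac{2}{5(r-1)}t^{-1}\).
\item \(Q=X(3,1,\frac53,\frac53)\), with \(|tQ|=2+O(t)\).
\item The wedge \(W=P\vee Q\) has \(23\) points and \(|tW|=1+at+O(t^2)\). The vector \(p\), extended by zero, still satisfies \(D_Wp=\mathbf 1\).
\item Adding the cone point with \(s>\max\{\operatorname{diam}W/2,|a|\}\) gives the twenty-four points. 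The factor now tends to \(s/(2s-a)\neq0\), so the divergence survives.
\end{itemize}
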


\begin{proof}
Write \(X(\alpha,\beta,\gamma,\delta)\) for Kamiyama's twelve-point
\((6,2)\) cyclic two-chunk space: its points are
\[
    \{a_i,b_i:i\in\mathbb Z/6\mathbb Z\},
\]
the distances between distinct \(a\)-points and distinct \(b\)-points
are \(\alpha\) and \(\beta\), respectively, and
\[
    d(a_i,b_j)=
    \begin{cases}
        \gamma,&j-i\in\{0,1\},\\
        \delta,&j-i\notin\{0,1\}.
    \end{cases}
\]
Kamiyama's magnitude calculation gives
\begin{equation}\label{eq:two-chunk-orbit-formula}
    |tX|
      =6\,\frac{A(t)+B(t)-2C(t)}
                    {A(t)B(t)-C(t)^2},
\end{equation}
where
\[
    A(t)=1+5e^{-\alpha t},\qquad
    B(t)=1+5e^{-\beta t},\qquad
    C(t)=2e^{-\gamma t}+4e^{-\delta t}.
\]
The common weights on the two blocks are
\begin{equation}\label{eq:two-chunk-block-weights}
    x(t)=\frac{B(t)-C(t)}{A(t)B(t)-C(t)^2},
    \qquad
    y(t)=\frac{A(t)-C(t)}{A(t)B(t)-C(t)^2}.
\end{equation}

Set
\[
    r=\frac{5+4\sqrt5}{11},
\]
and consider the two metric spaces
\[
    P=X\left(r,1,\frac{r+5}{4},\frac r2\right),
    \qquad
    Q=X\left(3,1,\frac53,\frac53\right).
\]
These are members of the two metric families analyzed by Kamiyama
\cite[Proposition~5.2]{Kamiyama}.  Substitution into
\eqref{eq:two-chunk-orbit-formula} and
\eqref{eq:two-chunk-block-weights}, followed by expansion at \(t=0\),
gives
\begin{equation}\label{eq:PQ-small-scale-data}
    |tP|=O(t),\qquad
    |tQ|=2+O(t),
\end{equation}
while the common weight on the \(a\)-block of \(P\) satisfies
\begin{equation}\label{eq:P-divergent-weight}
    x_P(t)
      =-\frac{2}{5(r-1)}\,t^{-1}+O(1).
\end{equation}
For clarity, the first expansion follows from
\[
    11r^2-10r-5=0,
\]
whereas
\[
    A_P(t)B_P(t)-C_P(t)^2
      =-\frac{25}{4}(r-1)^2t^2+O(t^3).
\]
Thus the denominator used above is nonzero for all sufficiently small
\(t>0\).

Form the metric wedge \(W=P\vee Q\), identifying \(a_0\in P\) with
one point of \(Q\).  Weightings on the two branches combine at the
identified point, and hence
\[
    |tW|=|tP|+|tQ|-1.
\]
The quotients in \eqref{eq:two-chunk-orbit-formula} have removable
singularities at \(t=0\), so for some \(a\in\mathbb R\),
\begin{equation}\label{eq:W-magnitude-expansion}
    |tW|=1+at+O(t^2).
\end{equation}
Moreover, the weight at any \(a_i\neq a_0\) is still \(x_P(t)\), and
therefore diverges by \eqref{eq:P-divergent-weight}.

Put
\[
    c=\frac{2}{5(r-1)}
\]
and define \(p\in\mathbb R^W\) by
\[
    p(a_i)=c,\qquad p(b_i)=-c,
\]
on the copy of \(P\), and \(p=0\) on \(W\setminus P\).  Then
\[
    \mathbf 1^Tp=0.
\]
If
\[
    \gamma=\frac{r+5}{4},
    \qquad
    \delta=\frac r2,
\]
then \(2\gamma+4\delta=5(r+1)/2\).  Consequently, for every point in
either block of \(P\),
\[
    (D_Wp)_i
      =\frac{5c}{2}(r-1)=1.
\]
For a point \(z\) in the other branch, wedge distance gives
\[
    \sum_{x\in P}d_W(z,x)p(x)
      =d_W(z,a_0)\sum_{x\in P}p(x)
       +\sum_{x\in P}d_P(a_0,x)p(x)
      =1.
\]
Hence
\begin{equation}\label{eq:W-gauging-vector}
    D_Wp=\mathbf 1.
\end{equation}

Choose
\[
    s>\max\{\operatorname{diam}(W)/2,|a|\}
\]
and adjoin a point \(*\) at distance \(s\) from every point of \(W\).
The resulting space
\[
    Y=W\cup\{*\}
\]
is metric and has \(23+1=24\) points.  With the coordinates ordered
as \(W,*\), set
\[
    g=(-sp,1).
\]
Equations \(\mathbf 1^Tp=0\) and \eqref{eq:W-gauging-vector} give
\[
    \mathbf 1^Tg=1,
    \qquad
    D_Yg=0.
\]
Thus \(g\) is a gauging for \(D_Y\), of concentration \(0\).  In
particular, \(D_Y\) has finite concentration.

It remains to show that \(Y\) has no microscopic weighting.  Put
\[
    q=e^{-st},\qquad m(t)=|tW|,
\]
and let \(w_W(t)\) be the weighting on \(tW\).  The similarity matrix
of \(tY\) has block form
\[
    Z_Y(t)=
    \begin{pmatrix}
        Z_W(t)&q\mathbf 1\\
        q\mathbf 1^T&1
    \end{pmatrix}.
\]
Solving its weighting equations shows that the restriction
\(u_W(t)\) of the weighting of \(tY\) to \(W\) is
\begin{equation}\label{eq:Y-weighting-restriction}
    u_W(t)
      =\frac{1-q}{1-q^2m(t)}\,w_W(t).
\end{equation}
By \eqref{eq:W-magnitude-expansion},
\[
    1-q=st+O(t^2),
    \qquad
    1-q^2m(t)=(2s-a)t+O(t^2).
\]
Since \(s>|a|\),
\[
    \lim_{t\downarrow0}
      \frac{1-q}{1-q^2m(t)}
      =\frac{s}{2s-a}\neq0.
\]
The coordinate of \(w_W(t)\) at any \(a_i\neq a_0\) diverges by
\eqref{eq:P-divergent-weight}; therefore the corresponding coordinate
of \(u_W(t)\) also diverges.  The weighting of \(tY\) has no
componentwise limit as \(t\downarrow0\), so \(Y\) has no microscopic
weighting.
\end{proof}

Roff and Willerton observe
\cite[Remark~3.4]{RoffWillerton}:
\begin{quote}
\emph{Part~(1) of Theorem~3.1 says that the existence of a microscopic
weighting on \(X\) implies the one-point property, but we do not know
whether the converse is true.  It is conceivable that some
cancellation could cause
\(\mu_X(t)=\mathbf 1^Tw(t)\) to converge to \(1\) as \(t\to0\), even
as the entries of \(w(t)\) diverge.}
\end{quote}

\begin{theorem}[The one-point property without a microscopic weighting]
\label{thm:one-point-no-microscopic-weighting}
There exists a twenty-three-point metric space \(W\) such that
\[
    \lim_{t\downarrow0}|tW|=1,
\]
but \(W\) does not admit a microscopic weighting.  Thus the converse
considered by Roff and Willerton is false.
\end{theorem}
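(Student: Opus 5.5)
The natural candidate is the wedge \(W=P\vee Q\) built in the proof of Theorem~\ref{thm:finite-concentration-counterexample}. Here \(P\) and \(Q\) are Kamiyama's twelve-point cyclic two-chunk spaces, and \(a_0\in P\) is identified with a point of \(Q\), giving \(12+12-1=23\) points. So I would reuse that construction verbatim and check the two required properties.

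For the one-point property, I would use additivity of magnitude over metric wedges, \(|tW|=|tP|+|tQ|-1\). The justification is the one already given: weightings on the two branches restrict and combine at the identified point. Combined with \eqref{eq:PQ-small-scale-data}, this gives
\[
 |tW| = O(t) + \bigl(2+O(t)\bigr) - 1 = 1+O(t),
\]
so \(\lim_{t\downarrow0}|tW|=1\). The expansion \(|tP|=O(t)\) comes from substituting \(r=(5+4\sqrt5)/11\), a root of \(11r^2-10r-5=0\), into Kamiyama's formula \eqref{eq:two-chunk-orbit-formula}. I would confirm it by expanding numerator and denominator to order \(t^2\). The denominator is \(-\tfrac{25}{4}(r-1)^2t^2+O(t^3)\), and the numerator must vanish to order \(t^3\) exactly when that quadratic relation holds. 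The expansion \(|tQ|=2+O(t)\) is a similar direct expansion for the parameters \((3,1,5/3,5/3)\).

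For the failure of a microscopic weighting, I would note that \(Z_W(t)\) is invertible for small \(t>0\). This holds because \(Z_P(t)\) and \(Z_Q(t)\) are invertible there: their block-orbit denominators are nonzero, and one must check the remaining non-symmetric eigenvalues as well. The weighting on a wedge is then the unique combination of the branch weightings. At any \(a_i\) with \(i\neq0\), the weight equals the common \(a\)-block weight \(x_P(t)\) of \(P\). By \eqref{eq:P-divergent-weight} this is \(-\tfrac{2}{5(r-1)}t^{-1}+O(1)\), which diverges because \(r>1\). Hence \(w_W(t)\) has no componentwise limit. The cancellation anticipated by Roff and Willerton happens here: the divergent \(a\)- and \(b\)-block weights of \(P\) cancel in the sum, which stays bounded.

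The main obstacle is justifying the invertibility and uniqueness claims. Kamiyama's formula computes only the weighting that is constant on the two orbits. To say it is \emph{the} weighting, I would use the cyclic \(\mathbb Z/6\) symmetry. By uniqueness, the weighting is invariant, so it is constant on orbits once invertibility is known. Invertibility for small \(t\) then follows if \(\det Z_P(t)\) is not identically zero as an analytic function of \(t\). I would check this by evaluating \(\det Z_P(t)\) at one convenient scale, or via its large-\(t\) behaviour, where \(Z_P(t)\to I\). Analyticity then gives invertibility for all small \(t>0\) except possibly at isolated points accumulating only away from zero.
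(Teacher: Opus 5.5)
Your proposal is correct and is essentially the paper's proof. You reuse the twenty-three-point wedge $W=P\vee Q$, get $|tW|=|tP|+|tQ|-1=1+O(t)$ from the wedge formula and the expansions $|tP|=O(t)$ and $|tQ|=2+O(t)$, and note that the weight at each $a_i\neq a_0$ is $x_P(t)=-\frac{2}{5(r-1)}t^{-1}+O(1)$. Your invertibility argument also correctly fills in a point the paper leaves implicit: the determinant is a finite exponential sum, hence entire, and tends to $1$ as $t\to\infty$, so its zeros cannot accumulate at $0$. Apply this directly to $\det Z_W(t)$ (or use $\det Z_W=\det Z_P\det Z_Q$), and $\mathbb Z/6$-invariance of the unique weighting then makes it orbit-constant.
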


\begin{proof}
Let \(W=P\vee Q\) be the twenty-three-point wedge constructed above.
We have
\[
    |tW|=1+at+O(t^2),
\]
and therefore
\[
    \lim_{t\downarrow0}|tW|=1.
\]
Thus \(W\) has the one-point property.  On the other hand, for every
\(a_i\neq a_0\), the corresponding coordinate of its weighting is
\[
    w_W(t)(a_i)
      =-\frac{2}{5(r-1)}\,t^{-1}+O(1),
\]
which diverges as \(t\downarrow0\).  Hence \(w_W(t)\) has no
componentwise limit, so \(W\) admits no microscopic weighting.
\end{proof}

\section{Nonmonotonicity}

Leinster and Meckes conjectured that the magnitude function
\[
        t\longmapsto |tX|
\]
is increasing for every compact metric space \(X\) of negative type
\cite[p.~29 of the arXiv version]{LeinsterMeckes}. Gomi and Meckes
later returned to this question and, after Theorem~1.4, considered the
more general assertion that magnitude is monotone under pointwise
enlargement of the metric in the stably positive definite setting
\cite[paragraph following Theorem~1.4]{GomiMeckes}. The original
conjecture is the special case obtained by rescaling a single metric.

Gomi and Tsuji subsequently constructed nonmonotone negative-type
semimetric spaces and recorded that genuine metric counterexamples had
been communicated by the present author
\cite[Introduction]{GomiTsuji}. The following gives an explicit
counterexample which is both a metric space and of strict negative
type.

\begin{theorem}[A nonmonotone magnitude function]
\label{thm:nonmonotone-magnitude}
There exist a fourteen-point metric space \(X\) of strict negative type
and \(t_0>0\) such that
\[
 \left.\frac{d}{dt}\Mag(tX)\right|_{t=t_0}<0.
\]
Consequently, the monotonicity conjecture of Leinster and Meckes, and
its comparison-metric extension discussed by Gomi and Meckes, are
false.
\end{theorem}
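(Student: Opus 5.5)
We will construct \(X\) explicitly and certify the sign of the derivative by finite computation. To keep the verification tractable, we use a space with a large symmetry group, so that the weighting is constant on orbits and \(\Mag(tX)\) becomes a rational function of a few exponentials. The paper already uses this reduction for Kamiyama's cyclic two-chunk spaces. The natural candidate is a two-orbit space: a block \(A\) of \(n\) points and a block \(B\) of \(m\) points, with \(n+m=14\), for instance \(n=m=7\) with a cyclic \(\mathbb Z/7\) action. Within-block distances are \(\alpha\) and \(\beta\), and cross distances take finitely many values according to \(j-i\) modulo \(7\). The orbit reduction then gives
\[
 \Mag(tX)=\frac{n\,B(t)+m\,A(t)-2\sqrt{nm}\,C(t)}{A(t)B(t)-C(t)^2}
\]
(up to the obvious normalization), as in \eqref{eq:two-chunk-orbit-formula}. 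Here \(A\), \(B\) and \(C\) are row sums of the similarity kernel within and between blocks. We then differentiate this closed form in \(t\).

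The heuristic for the parameters comes from Theorems~\ref{thm:infinite-small-scale} and~\ref{thm:finite-concentration-counterexample}: near-degenerate denominators in this family produce large excursions of \(|tX|\), including values below \(1\) and blow-up. A strictly negative-type space cannot blow up, since Theorem~\ref{thm:first-variation} and its bound \(\mu_X(t)\le e^{tM(X)}\) control it. We therefore perturb toward the critical parameter region while staying inside the strict negative-type cone. There the magnitude should overshoot and then come back down, which gives a negative derivative at some \(t_0\).

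The concrete steps are as follows.
\begin{enumerate}
\item Choose rational distances, or simple algebraic ones, in the family. Verify the triangle inequalities, which reduce to a handful of mixed inequalities as in the proof of Theorem~\ref{thm:infinite-small-scale}.
\item Verify strict negative type. We would show that the centered distance matrix \(-\tfrac12 JDJ\), with \(J\) the centering projection, is positive definite on \(\mathbf 1^\perp\). The circulant structure diagonalizes \(D\) by the characters of \(\mathbb Z/7\). This reduces the check to finitely many explicit \(2\times2\) blocks: for each nontrivial character, the block must be negative definite, and for the trivial character, the block must be negative on the mass-zero vector. Strictness then amounts to strict inequalities among finitely many explicit numbers.
\item Check that \(A(t)B(t)-C(t)^2>0\) near the chosen \(t_0\). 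Negative type already gives this through positive definiteness of \(Z_X(t)\).
\item Evaluate \(\frac{d}{dt}\Mag(tX)\) at a specific rational \(t_0\) and certify that it is negative. We do this with interval arithmetic or explicit rational bounds on the exponentials. Since the inequality is strict, a sufficiently accurate numerical enclosure is a rigorous proof.
\end{enumerate}
The final "Consequently" is immediate. Rescaling is a special case of pointwise enlargement of the metric: \(t_0d\le td\) for \(t>t_0\). Strict negative type implies stable positive definiteness by Schoenberg's theorem.

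The main obstacle is finding parameters that satisfy all three requirements at once: metric, strictly negative type, and nonmonotone. If the two-orbit family with one cross-distance pattern turns out to be too rigid, the first fallback is to allow three distinct cross distances. The second is to split into orbits of sizes such as \(7+7\) versus \(6+6+2\), keeping the matrix reduction at most \(3\times3\). In that larger family we would search numerically for a region where the derivative is negative and the negative-type eigenvalues are safely positive, and then round to exact values.
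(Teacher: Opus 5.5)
\textbf{There is a genuine gap: the proposal contains no example and no argument that one exists.} You chose the right family: two blocks of seven points with a diagonal $\mathbb Z/7\mathbb Z$ action, and cross distances depending on $j-i$. Your verification tools are also legitimate: character decomposition for strict negative type, the orbit formula $7(A+B-2C)/(AB-C^2)$, and a certified sign at a fixed $t_0$. But the theorem is existential and its whole content is the example. You give no distances, and you leave finding parameters that are simultaneously metric, strictly negative type and nonmonotone as the ``main obstacle'' for a numerical search. Nothing in the proposal shows that this search must succeed, so as written it does not prove the statement.

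Your heuristic also points at the wrong degeneracy. The degenerate spaces behind Theorems~\ref{thm:infinite-small-scale} and~\ref{thm:finite-concentration-counterexample} are not of negative type. Negative type forces $|tX|\geq1$, yet one of them has magnitude tending to $0$. At Kamiyama's critical $r_0$, the nontrivial-character condition $\alpha\beta\geq3(\gamma-\delta)^2$ fails. So ``approaching them from inside the strict negative-type cone'' simply runs into the boundary of the cone, and it is the behaviour on that boundary that matters.

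The missing idea is to work exactly on that boundary and then perturb. Take the $7+7$ family with cross distance $\gamma$ for $j-i\in S$, where $|S|=3$, and $\delta$ otherwise. A mass-zero block-constant vector $(s\mathbf1,-s\mathbf1)$ has distance form $14s^2\bigl(3(\alpha+\beta)-3\gamma-4\delta\bigr)$. The same bracket, times $-2$ and $-14$ respectively, is the linear coefficient of $A+B-2C$ and of $AB-C^2$. Hence $|tX|\to1$ in the strict interior. On the boundary $3\gamma+4\delta=3(\alpha+\beta)$ both linear terms vanish, so $\lim_{t\downarrow0}|tX_0|$ is a ratio of second-order coefficients, which can exceed $1$.

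The paper proceeds in three steps.
\begin{enumerate}
\item It takes the boundary space $\alpha=7/8$, $\beta=9/8$, $\gamma=26/21$, $\delta=4/7$, with the perfect difference set $S=\{0,1,3\}$. Then $PP^T=2I+J$, so every nontrivial character reduces to the single inequality $\alpha\beta>2(\gamma-\delta)^2$.
\item It computes exactly $|tX_0|=\frac{127}{73}-\frac{12672}{37303}t+O(t^2)$.
\item It adds $\varepsilon>0$ to all cross distances. This adds $-98\varepsilon s^2$ to the form, giving strict negative type. Since $Z_0(t_0)$ is invertible, the derivative at a fixed small $t_0$ varies continuously in $\varepsilon$ and stays negative.
\end{enumerate}
The non-automatic ingredient is a boundary negative-type space whose magnitude has negative initial slope. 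Your plan neither targets nor supplies this; once you have it, the interval-arithmetic step becomes unnecessary.
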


\begin{proof}
Let \(S=\{0,1,3\}\subset\mathbb Z/7\mathbb Z\), and let
\(P=(P_{ij})\) be the \(7\times7\) matrix defined by
\[
 P_{ij}=1
 \quad\Longleftrightarrow\quad
 j-i\in S.
\]
The six nonzero differences of distinct elements of \(S\) are precisely
the six nonzero residues modulo \(7\).  Hence, with \(J\) denoting the
all-ones matrix,
\[
 P\mathbf1=3\mathbf1,\qquad
 P^T\mathbf1=3\mathbf1,\qquad
 PP^T=2I+J.
\]
In particular, the operator norm of \(P\) on \(\mathbf1^\perp\) is
\(\sqrt2\).

For \(\varepsilon\geq0\), let
\[
 X_\varepsilon=\{a_0,\ldots,a_6,b_0,\ldots,b_6\}
\]
and, for distinct points, define
\[
 d(a_i,a_j)=\frac78,\qquad
 d(b_i,b_j)=\frac98,
\]
and
\[
 d(a_i,b_j)=
 \begin{cases}
  \dfrac{26}{21}+\varepsilon,&j-i\in S,\\[2mm]
  \dfrac47+\varepsilon,&j-i\notin S.
 \end{cases}
\]
These distances satisfy the triangle inequality.  Indeed, if
\(\delta=4/7+\varepsilon\) and
\(\gamma=26/21+\varepsilon\), every nontrivial case follows from
\[
 \frac98<2\delta,
 \qquad
 \gamma-\delta=\frac23<\frac78.
\]

Let \(D_\varepsilon\) be the distance matrix.  If
\(z=(u,v)\in\mathbb R^7\oplus\mathbb R^7\) satisfies
\(\mathbf1^Tu+\mathbf1^Tv=0\), write
\[
 u=s\mathbf1+u_0,\qquad
 v=-s\mathbf1+v_0,
 \qquad
 u_0,v_0\perp\mathbf1.
\]
Since
\[
 3\frac{26}{21}+4\frac47
 =
 6
 =
 3\left(\frac78+\frac98\right),
\]
a direct calculation gives
\begin{equation}\label{eq:nonmonotone-distance-form}
 z^TD_\varepsilon z
 =
 -98\varepsilon s^2
 -\frac78\|u_0\|^2
 -\frac98\|v_0\|^2
 +\frac43u_0^TPv_0.
\end{equation}
Now \(\|P\|_{\mathbf1^\perp}=\sqrt2\), and
\[
 \frac78\frac98-\left(\frac{2\sqrt2}{3}\right)^2
 =
 \frac{55}{576}>0.
\]
Thus the last three terms in
\eqref{eq:nonmonotone-distance-form} form a negative definite
quadratic form in \((u_0,v_0)\).  It follows that
\(z^TD_\varepsilon z<0\) for every nonzero \(z\perp\mathbf1\) whenever
\(\varepsilon>0\).  Hence \(X_\varepsilon\) has strict negative type.

It remains to prove nonmonotonicity.  At \(\varepsilon=0\), simultaneous
cyclic translation of the two blocks acts by isometries.  For the
similarity matrix
\[
 Z_0(t)=\bigl(e^{-td(x,y)}\bigr)_{x,y\in X_0},
\]
put
\[
 A=1+6e^{-7t/8},\qquad
 B=1+6e^{-9t/8},\qquad
 C=3e^{-26t/21}+4e^{-4t/7}.
\]
On the block-constant subspace, \(Z_0(t)\) is represented by
\[
 \begin{pmatrix}A&C\\ C&B\end{pmatrix}.
\]
On its invariant complement,
\[
 Z_0(t)=-tD_0+O(t^2),
\]
and \(-D_0\) is positive definite there by
\eqref{eq:nonmonotone-distance-form}.  Moreover,
\[
 AB-C^2=\frac{73}{96}t^2+O(t^3)>0
\]
for all sufficiently small \(t>0\).  Thus \(Z_0(t)\) is invertible
there, and symmetry gives
\begin{equation}\label{eq:nonmonotone-magnitude-expansion}
 \Mag(tX_0)
 =
 7\,\frac{A+B-2C}{AB-C^2}
 =
 \frac{127}{73}
 -\frac{12672}{37303}t
 +O(t^2).
\end{equation}
Therefore \(\frac{d}{dt}\Mag(tX_0)<0\) for all sufficiently small
\(t>0\).  Fix such a \(t_0\).  Since \(Z_0(t_0)\) is invertible,
\[
 \left.
 \frac{\partial}{\partial t}
 \bigl(\mathbf1^TZ_\varepsilon(t)^{-1}\mathbf1\bigr)
 \right|_{t=t_0}
\]
depends continuously on \(\varepsilon\) near \(0\).  It remains
negative for every sufficiently small \(\varepsilon>0\).  Choosing one
such \(\varepsilon\) and setting \(X=X_\varepsilon\) proves the result.
\end{proof}

\section{Continuity and dimension}

Katsumasa, Roff, and Yoshinaga formulated the following conjecture:
\begin{quote}
Let \(U\) be a finite-dimensional, positive definite normed
space. Then the function \(X\mapsto |X|\) is continuous, with
respect to the Hausdorff topology, on the set of finite subsets
of \(U\). In particular, magnitude is stable for finite subsets
of Euclidean or taxicab space.
\end{quote}
See \cite[Conjecture~1.3]{KatsumasaRoffYoshinaga}.
Kališnik and Lešnik subsequently proved continuity at the skew
finite subsets of \(\ell_1^N\), which form an open dense class
\cite{KalisnikLesnikSkew}. The following theorem proves the
conjecture for every finite subset of every finite-dimensional
positive definite normed space.

\begin{theorem}
\label{thm:finite-subset-continuity}
Let \(U\) be a finite-dimensional positive definite normed space.
Suppose that \(F,X_1,X_2,\ldots\) are nonempty finite subsets of
\(U\) and
\[
        d_H(X_j,F)\longrightarrow 0.
\]
Then
\[
        |X_j|\longrightarrow |F|.
\]
\end{theorem}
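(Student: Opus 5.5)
The plan is to prove the two inequalities
\[
 \liminf_{j}|X_j|\geq |F|,
 \qquad
 \limsup_j|X_j|\leq |F|
\]
separately, working throughout in the reproducing kernel Hilbert space \(H\) of the kernel \(k(x,y)=e^{-\|x-y\|}\) on \(U\). This space exists because \(U\) is positive definite. For a finite \(A\subseteq U\), write \(\phi_x=k(x,\cdot)\). Then
\[
 |A|=\sup_{w}\frac{(\mathbf1^Tw)^2}{w^TZ_Aw}
    =\min\{\|h\|_H^2:\ h(x)=1\ \text{for all }x\in A\},
\]
and the minimizer is \(h_A=\sum_{x\in A}w_x\phi_x\), where \(w\) is the weighting of \(A\). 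The first (supremum) formula will be used for lower bounds and the second (minimum) formula for upper bounds. Both are standard consequences of the invertibility of \(Z_A\), which holds because \(U\) is positive definite.

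\emph{Lower bound.} For each \(p\in F\) choose \(x_j(p)\in X_j\) with \(\|x_j(p)-p\|\leq d_H(X_j,F)\). For large \(j\) these points are distinct, and their Gram matrix \(Z_{\{x_j(p)\}}\) converges to \(Z_F\). Test the supremum formula for \(X_j\) with the weighting of \(F\), transported to the points \(x_j(p)\) and extended by zero elsewhere. The resulting quotient tends to \(|F|\), so \(\liminf_j|X_j|\geq|F|\). This is the routine half.

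\emph{Upper bound.} For large \(j\), partition \(X_j\) into clusters \(C_p=\{x\in X_j:\|x-p\|\leq\rho_j\}\) with \(\rho_j=d_H(X_j,F)\to0\). The sizes of the clusters may be unbounded, and this is the essential difficulty. The aim is to exhibit, for each \(j\), a function \(h_j\in H\) with \(h_j\equiv1\) on \(X_j\) and \(\|h_j\|_H^2\leq |F|+o(1)\). I would start from \(h_F\), which satisfies \(h_F(p)=1\) for every \(p\in F\) and is Lipschitz, so that \(|h_F-1|=O(\rho_j)\) on \(C_p\). The plan is then to add corrections \(g_{p,j}\), each supported in a small ball around \(p\), equal to \(1-h_F\) on \(C_p\), and satisfying \(\|g_{p,j}\|_H\to0\).

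To produce such corrections I would use the Fourier description of \(H\): \(\|g\|_H^2=\int|\hat g(\xi)|^2/\hat k(\xi)\,d\xi\). What is needed is a polynomial lower bound \(\hat k(\xi)\geq c(1+|\xi|)^{-N}\). I expect this to hold because \(e^{-\|x\|}\) is positive definite on a finite-dimensional space and has only a Lipschitz singularity at the origin. Given this bound, \(H\) contains the Sobolev space \(W^{N/2,2}\).

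\emph{Main obstacle.} The natural correction \((1-h_F)\chi\bigl((\cdot-p)/\varepsilon\bigr)\) is not smooth, because \(h_F\) has a kink at \(p\); its high Sobolev norms are therefore infinite. I would first try to avoid this by replacing \(1-h_F\) on each cluster with a smooth function that interpolates it only on the finite set \(C_p\). Concretely, I would use the local expansion
\[
 h_F(x)=1+w_p\bigl(e^{-\|x-p\|}-1\bigr)+(\text{smooth near }p).
\]
The smooth part can be cut off at scale \(\varepsilon_j\gg\rho_j\) at vanishing \(H\)-cost. The singular part \(w_p(e^{-\|x-p\|}-1)\) is a multiple of \(\phi_p\) minus a constant. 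I would compare it directly with \(w_p\phi_p\) and absorb it by the splitting
\[
 w_p\phi_p=w_p\bigl(\phi_p-\psi_{p,j}\bigr)+w_p\psi_{p,j},
\]
where \(\psi_{p,j}\) is the \(H\)-projection of \(\phi_p\) onto \(\operatorname{span}\{\phi_x:x\in C_p\}\). The hope is that \(\|\phi_p-\psi_{p,j}\|_H\to0\), since \(\|\phi_p-\phi_x\|_H^2=2(1-e^{-\|x-p\|})\to0\) for \(x\in C_p\). Making this projection step quantitative, uniformly in the cluster sizes, is the step I expect to be hardest.
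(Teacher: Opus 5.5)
Your lower bound is correct and is the paper's argument in different language: test the variational formula on points $x_j(p)$ near each $p\in F$, or equivalently use monotonicity and $|F_j|\to|F|$. Your min-norm-interpolant framing of the upper bound is also the paper's. The upper bound nevertheless has a genuine gap at exactly the step you flag, and the projection splitting cannot close it.

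Since $\psi_{p,j}$ is the orthogonal projection of $\phi_p$ onto $\operatorname{span}\{\phi_x:x\in C_p\}$, the reproducing property gives $\psi_{p,j}(x)=\langle\phi_p,\phi_x\rangle_H=\phi_p(x)$ for every $x\in C_p$. So $\phi_p-\psi_{p,j}$ vanishes on $C_p$ and does nothing to the mismatch $w_p(1-e^{-\|x-p\|})$ there. Also, $\|\phi_p-\psi_{p,j}\|_H\to0$ is trivial: compare with a single $\phi_x$.

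What you actually need is that $\phi_p$ be $H$-close to the affine set $\{u\in H:u\equiv1\text{ on }C_p\}$. That distance equals $\|\psi_{p,j}-h_{C_p}\|_H$. Since $\|h_{C_p}-\phi_x\|_H^2=|C_p|-1$ for $x\in C_p$, it differs from $\sqrt{|C_p|-1}$ by at most $\|\phi_p-\phi_x\|_H$. So the missing ingredient is the cardinality-free estimate $|C_p|\to1$ for clusters of radius $\rho_j\to0$, and nothing in your Fourier--Sobolev setup supplies it.

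The paper gets this estimate from maximal energy. The Blaschke--L\'evy representation yields an interval-indicator embedding $J$ with $\|Jx-Jy\|^2=\|x-y\|$, under which $B_U$ has a spherical Schoenberg realization, so $M(B_U)<\infty$. The bound $|tX|\le e^{tM(X)}$ from Theorem~\ref{thm:first-variation} then gives $1\le|C_p|\le|\rho_jB_U|\le e^{\rho_jM(B_U)}$.

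Two other steps also fail as stated. First, your expansion of $h_F$ near $p$ assumes $e^{-\|x-q\|}$ is smooth near $p$ for $q\neq p$. This is false for non-smooth norms: in $\ell_1^2$ with $F=\{(0,0),(0,1)\}$, the term $e^{-\|x-(0,1)\|_1}$ has a kink along the line $x_1=0$, which passes through $(0,0)$. Configurations with shared coordinates are exactly where the theorem goes beyond the known generic (skew) case, so this is not a technicality. The polynomial lower bound on $\hat k$ is also left unproved.

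Second, even given local interpolants $u_p$, the sum $\sum_pw_pu_p$ is not constant on the other clusters, so the clusters still have to be decoupled. The paper avoids pointwise smoothness altogether by using the Fock-space feature map $\Phi(x)=\varepsilon(Jx)$. Let $R_j$ be the set, of measure $O(\rho_j)$, on which $Jx\neq Jp_i$ for some $x$ in the $i$-th cluster. Factoring over $L_2(R_j)\oplus L_2(R_j^c)$ gives exactly $\Phi(x)=a_{j,x}\otimes b_{j,i}$. Then $q_j=\sum_ih_{j,i}\otimes d_{j,i}$, with $h_{j,i}$ the cluster interpolants and $d_{j,i}$ dual to the $b_{j,i}$, interpolates $1$ on all of $X_j$. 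Its squared norm satisfies $\|q_j\|^2\to\mathbf1^TZ_F^{-1}\mathbf1=|F|$.
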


\begin{proof}
Write
\[
        F=\{p_1,\ldots,p_k\},
        \qquad
        \eta_j=d_H(X_j,F).
\]
Set
\[
 \delta_F=
 \begin{cases}
  \min_{i\neq\ell}\|p_i-p_\ell\|,&k\geq2,\\
  +\infty,&k=1.
 \end{cases}
\]
For all sufficiently large \(j\), one has
\(2\eta_j<\delta_F\).  The definition of Hausdorff distance then
shows that the sets
\[
        A_{j,i}
        =X_j\cap\overline B(p_i,\eta_j),
        \qquad 1\leq i\leq k,
\]
are nonempty and cover \(X_j\), while the inequality
\(2\eta_j<\delta_F\) makes them pairwise disjoint.  They therefore
form a partition of \(X_j\).

We first choose a convenient Hilbert-space realization of \(U\).
By the Blaschke--Lévy representation
\cite[Proposition~2.2]{MeckesHolmesThompson}, there is a finite
even Borel measure \(\sigma\) on \(S_{U^*}\) such that
\[
        \|u\|
        =\int_{S_{U^*}}|\theta(u)|\,d\sigma(\theta)
        \qquad (u\in U).
\]
For \(a\in\mathbb R\), let
\[
g_a(s)=
\begin{cases}
 \mathbf 1_{[0,a]}(s),&a\geq0,\\
-\mathbf 1_{[a,0]}(s),&a<0.
\end{cases}
\]
In
\[
        H=L_2(S_{U^*}\times\mathbb R,\sigma\otimes ds)
\]
define
\[
        (Jx)(\theta,s)=g_{\theta(x)}(s).
\]
Since
\[
        \int_{\mathbb R}|g_a-g_b|^2\,ds=|a-b|,
\]
we have
\[
        \|Jx-Jy\|_H^2=\|x-y\|.
\]

We shall need the fact that the closed unit ball \(B_U\) has finite
maximal energy. Define
\[
        c(\theta,s)
        =\frac12\operatorname{sgn}(s)\mathbf 1_{[-1,1]}(s).
\]
If \(x\in B_U\), then \(|\theta(x)|\leq1\), and consequently
\[
        |g_{\theta(x)}(s)-c(\theta,s)|=\frac12
\]
for almost every \(s\in[-1,1]\), while both functions vanish
outside \([-1,1]\). Hence
\[
        \|Jx-c\|_H^2
        =\frac12\sigma(S_{U^*})
        \qquad(x\in B_U).
\]
Thus \(x\mapsto Jx-c\) is a spherical Schoenberg realization of
\(B_U\). The spherical energy bound therefore gives
\[
        M(B_U)\leq \sigma(S_{U^*})<\infty.
\]

We now use the normalized coherent vectors
\[
 \varepsilon(h)
 =e^{-\|h\|^2}
   \bigoplus_{m=0}^{\infty}
   \frac{(\sqrt2\,h)^{\otimes m}}{\sqrt{m!}}
        \in\mathcal F_s(H).
\]
They satisfy
\[
        \langle\varepsilon(h),\varepsilon(k)\rangle
        =e^{-\|h-k\|^2}.
\]
Therefore
\[
        \Phi(x)=\varepsilon(Jx)
\]
is a feature map for the similarity kernel:
\[
        \langle\Phi(x),\Phi(y)\rangle=e^{-\|x-y\|}.
\]
For every finite \(E\subset U\), its magnitude is the squared norm
of the least-norm vector \(q\) satisfying
\[
        \langle q,\Phi(x)\rangle=1
        \qquad(x\in E).
\]

Let \(R_j\subset S_{U^*}\times\mathbb R\) be the union, over
\(1\leq i\leq k\) and \(x\in A_{j,i}\), of the intervals between
\(\theta(p_i)\) and \(\theta(x)\):
\[
 R_j=
 \bigcup_{i=1}^k\ \bigcup_{x\in A_{j,i}}
 \left\{(\theta,s):
 \min\{\theta(p_i),\theta(x)\}\leq s
 \leq\max\{\theta(p_i),\theta(x)\}\right\}.
\]
For each fixed \(\theta\), the part contributed by the \(i\)-th
cluster has length at most \(2\eta_j\). Consequently,
\[
        (\sigma\otimes ds)(R_j)
        \leq 2k\,\sigma(S_{U^*})\eta_j
        \longrightarrow0.
\]

The decomposition
\[
 H=L_2(R_j)\oplus L_2(R_j^c)
\]
induces the canonical factorization of symmetric Fock space. If
\(x\in A_{j,i}\), then \(Jx=Jp_i\) outside \(R_j\), and hence
\[
        \Phi(x)=a_{j,x}\otimes b_{j,i},
\]
where
\[
 a_{j,x}
 =\varepsilon\bigl((Jx)\mathbf1_{R_j}\bigr),
 \qquad
 b_{j,i}
 =\varepsilon\bigl((Jp_i)\mathbf1_{R_j^c}\bigr).
\]
Since \(|Jx|\leq1\) pointwise,
\[
 \bigl\|(Jx)\mathbf1_{R_j}\bigr\|_H^2
 \leq(\sigma\otimes ds)(R_j)\longrightarrow0
\]
uniformly in \(x\in X_j\). Thus \(a_{j,x}\) converges uniformly to
the vacuum vector \(\Omega\).

For each \(i\), let \(h_{j,i}\) be the least-norm vector satisfying
\[
        \langle h_{j,i},a_{j,x}\rangle=1
        \qquad(x\in A_{j,i}).
\]
All differences \(Jx-Jy\), with \(x,y\in A_{j,i}\), are supported
in \(R_j\). The Gram matrix of the vectors \(a_{j,x}\) is therefore
exactly the similarity matrix of \(A_{j,i}\).  This matrix is
strictly positive definite because \(U\) is positive definite.
Consequently the interpolation constraints are consistent and have a
unique least-norm solution in the span of the vectors \(a_{j,x}\), and
\[
        \|h_{j,i}\|^2=|A_{j,i}|.
\]
Since \(A_{j,i}\subset p_i+\eta_jB_U\), monotonicity, the
exponential magnitude bound, and the homogeneity of maximal
energy give
\[
  1\leq |A_{j,i}|
  \leq |\eta_jB_U|
  \leq e^{\eta_jM(B_U)}
  =1+o(1).
\]
This estimate is uniform in \(i\). For \(x\in A_{j,i}\),
\[
 \|h_{j,i}-a_{j,x}\|^2
 =\|h_{j,i}\|^2+\|a_{j,x}\|^2
   -2\langle h_{j,i},a_{j,x}\rangle
 =|A_{j,i}|-1=o(1).
\]
It follows that
\[
        h_{j,i}\longrightarrow\Omega
        \quad\text{and}\quad
        \langle h_{j,i},h_{j,\ell}\rangle\longrightarrow1
\]
for every \(i,\ell\).

Let
\[
        B_j=\bigl(\langle b_{j,i},b_{j,\ell}\rangle\bigr)_
             {i,\ell=1}^k.
\]
Its entries have the explicit form
\[
 (B_j)_{i\ell}
 =
 \exp\left(
 -\left\|
 (Jp_i-Jp_\ell)\mathbf1_{R_j^c}
 \right\|_H^2
 \right).
\]
The difference \(Jp_i-Jp_\ell\) has pointwise absolute value at
most \(1\), and therefore
\[
 \left\|
 (Jp_i-Jp_\ell)\mathbf1_{R_j}
 \right\|_H^2
 \leq(\sigma\otimes ds)(R_j)\longrightarrow0.
\]
Since \(\|Jp_i-Jp_\ell\|_H^2=\|p_i-p_\ell\|\), we obtain
\[
        (B_j)_{i\ell}
        \longrightarrow e^{-\|p_i-p_\ell\|}.
\]
Thus
\[
        B_j\longrightarrow
        Z_F=\bigl(e^{-\|p_i-p_\ell\|}\bigr)_{i,\ell=1}^k.
\]
The matrix \(Z_F\) is positive definite, so \(B_j\) is invertible
for all sufficiently large \(j\). Let \(d_{j,1},\ldots,d_{j,k}\)
be the basis dual to \(b_{j,1},\ldots,b_{j,k}\). Then
\[
        \langle d_{j,i},d_{j,\ell}\rangle
        =(B_j^{-1})_{i\ell}.
\]

Define
\[
        q_j=\sum_{i=1}^k h_{j,i}\otimes d_{j,i}.
\]
If \(x\in A_{j,\ell}\), then
\[
 \langle q_j,\Phi(x)\rangle
 =
 \sum_{i=1}^k
 \langle h_{j,i},a_{j,x}\rangle
 \langle d_{j,i},b_{j,\ell}\rangle
 =
 \langle h_{j,\ell},a_{j,x}\rangle
 =1.
\]
Thus \(q_j\) is an interpolant for \(X_j\), and hence
\[
\begin{aligned}
 \limsup_{j\to\infty}|X_j|
 &\leq\lim_{j\to\infty}\|q_j\|^2\\
 &=\lim_{j\to\infty}
   \sum_{i,\ell=1}^k
   \langle h_{j,i},h_{j,\ell}\rangle
   (B_j^{-1})_{i\ell}\\
 &=\mathbf1^{T}Z_F^{-1}\mathbf1
 =|F|.
\end{aligned}
\]

Finally, choose \(x_{j,i}\in X_j\) with
\(\|x_{j,i}-p_i\|\leq\eta_j\), and put
\[
        F_j=\{x_{j,1},\ldots,x_{j,k}\}.
\]
For large \(j\), these points are distinct. Their similarity
matrices converge to \(Z_F\), so
\[
        |F_j|\longrightarrow|F|.
\]
Since \(F_j\subseteq X_j\), monotonicity gives
\[
        |F_j|\leq|X_j|.
\]
Consequently,
\[
        |F|
        \leq\liminf_{j\to\infty}|X_j|
        \leq\limsup_{j\to\infty}|X_j|
        \leq|F|,
\]
which proves the theorem.
\end{proof}

Leinster and Meckes ask
\cite[Section~5, Question~(2)]{LeinsterMeckes}:
\begin{quote}
Is magnitude continuous on the class of compact sets in a positive
definite normed (or \(p\)-normed) space? What if we assume the space is
finite-dimensional, or we restrict to geodesic sets, or convex sets?
\end{quote}
All three proposed restrictions fail simultaneously.

\begin{theorem}[Discontinuity on compact convex sets]
\label{thm:convex-hausdorff-discontinuity}
There are compact convex sets
\(A_\varepsilon,A\subset\ell_1^3\) such that
\[
        d_H(A_\varepsilon,A)\longrightarrow0
        \qquad(\varepsilon\downarrow0),
\]
but
\[
        \lim_{\varepsilon\downarrow0}|A_\varepsilon|>|A|.
\]
Consequently, magnitude is not Hausdorff-continuous on compact subsets
of finite-dimensional positive definite normed spaces, even when
restricted to compact convex, and hence compact geodesic, subsets.
\end{theorem}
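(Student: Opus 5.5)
The plan is to take for \(A\) a \emph{flat} compact convex set: a two-dimensional convex polygon lying in the plane \(\Pi=\{x+y+z=0\}\subset\ell_1^3\). For \(A_\varepsilon\) I would take full-dimensional convex thickenings of \(A\), for instance \(A_\varepsilon=A+[0,\varepsilon]^3\). Then \(d_H(A_\varepsilon,A)\le3\varepsilon\to0\), and both families are convex, hence \(\ell_1\)-convex and geodesic, so the final sentence of the statement follows once the strict inequality is proved.

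The mechanism I expect is the following. On \(\Pi\) the restriction of the \(\ell_1\)-norm is a hexagonal norm. Its Blaschke--Lévy measure, in the sense used in the proof of Theorem~\ref{thm:finite-subset-continuity}, is supported on the three directions dual to the coordinate functionals. A genuinely three-dimensional body instead "sees" all three coordinate projections with their areas.

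Since \(\Mag\) is the supremum over finite subsets and Theorem~\ref{thm:finite-subset-continuity} gives continuity at finite sets, lower semicontinuity is automatic: \(\liminf_\varepsilon|A_\varepsilon|\ge|A|\). The whole task is therefore to exhibit a definite gap, which splits into two estimates.

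For the \textbf{upper bound on \(|A|\)}, I would use the capacity formula \eqref{eq:magnitude-capacity} in its dual form, namely the least-norm interpolant in the Fock-space realization \(\Phi=\varepsilon\circ J\) from the proof of Theorem~\ref{thm:finite-subset-continuity}. Any vector \(q\) with \(\langle q,\Phi(x)\rangle\ge1\) on \(A\) gives \(|A|\le\|q\|^2\). I would first try the explicit potential coming from the one-dimensional picture: for a segment, the weighting is half point masses at the ends plus \(\tfrac12\)Lebesgue measure. Here that means a product-type candidate built along the three support directions of the hexagonal norm. The aim is an explicit number \(|A|\le c_0(A)\).

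For the \textbf{lower bound on \(|A_\varepsilon|\)}, I would use monotonicity together with the \(\ell_1\)-product formula for boxes, which gives \(|[0,a]\times_1[0,b]\times_1[0,c]|=\prod(1+\cdot/2)\). Concretely, I would find inside \(A_\varepsilon\) configurations (thin staircases of small boxes, or the \(\varepsilon\)-thickened boundary) whose magnitude can be bounded from below by an explicit test measure \(\nu\) in \(\mathcal E_t(\nu)\). The expectation is that these capture the projected areas of \(A\) onto the coordinate planes. That contribution is absent in the flat metric of \(A\), which only carries the one-dimensional widths along \(\Pi\). Enlarging \(A\) (for example by scaling it by \(R\)) should make the area term dominate, so that \(\liminf|A_\varepsilon|\) grows like \(R^2\cdot\tfrac14V_2'(A)\) while \(|A|\) is controlled by the hexagonal-norm computation. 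This also explains why the example refutes the \(\ell_1\)-intrinsic-volume formula.

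The main obstacle is making both bounds quantitative and sharp enough to separate. I expect the lower bound for the thickened sets to be the harder half: one must show that magnitude of \(A_\varepsilon\) does not collapse to that of \(A\) as \(\varepsilon\to0\). I would first try to reduce it to products and wedges of intervals, using the multiplicativity of \(\ell_1\)-products and the wedge rule \(|X\vee Y|=|X|+|Y|-1\) used in Section~3, so that every quantity becomes an explicit elementary function. If that fails, I would fall back on choosing \(A\) at a large scale and comparing leading-order growth rates in the scale parameter.
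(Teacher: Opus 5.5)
Your example is the right kind. The paper's \(A=\frac1{10}\Delta\) is a triangle in a translate of \(\Pi\), and for your nested family \(A\subseteq A_\varepsilon\) the limit exists by monotonicity. But the proposal supplies neither of the two estimates that make up the proof, and the one concrete device you offer for the upper bound cannot work.

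\textbf{Upper bound on \(|A|\).} A tensor product of the one-dimensional segment weightings interpolates \(\Phi\) on the whole bounding box \(\prod_i\pi_i(A)\). If the box has side lengths \(L_i\), this gives only \(|A|\leq\prod_i(1+L_i/2)\). That is the intrinsic-volume sum of the bounding box, hence at least \(W(A)\). For \(A=\frac1{10}\Delta\) it equals \(9261/8000>923/800\), and after scaling by \(R\) it is of order \(R^3\). A pure tensor that interpolates \(1\) on \(A\) can do better only if its factors interpolate exponentials \(C_ie^{\alpha u}\) with \(\alpha\neq0\), which exploits that \(x_1+x_2+x_3\) is constant on \(A\). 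Nothing of this kind is in your outline.

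There is also a logical slip. The relaxed constraint \(\langle q,\Phi(x)\rangle\geq1\) does not in general bound magnitude. On a finite set its minimum is the maximum diversity \(\sup_{w\geq0}(\mathbf1^Tw)^2/w^TZw\), which is strictly smaller than magnitude whenever the weighting has a negative entry. You need equality.

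The idea you are missing is the bound \(|A|\leq e^{M(A)}\) from the proof of Theorem~\ref{thm:first-variation}, combined with an explicit spherical Schoenberg realization. Take \(c=\frac13(Je_1+Je_2+Je_3)\in\aff J(\Delta)\). Then \(\|Jx-c\|^2=\frac23\) on \(\Delta\), so \(M(\Delta)=\frac43\), \(M(A)=\frac2{15}\), and \(|A|\leq e^{2/15}\).

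\textbf{Lower bound on \(|A_\varepsilon|\).} No staircases or test measures are needed. Leinster and Meckes proved \(|K|=\sum_i2^{-i}V_i'(K)\) for convex bodies \(K\subset\ell_1^3\), and the \(V_i'\) are Hausdorff-continuous. Hence \(\lim_{\varepsilon\downarrow0}|A_\varepsilon|=W(A)=\frac{923}{800}>e^{2/15}\). Your staircase route produces no number as written, and would in the end need an exact formula for \(\ell_1\)-convex unions of boxes, which is input of the same kind.

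\textbf{Scale and your heuristic.} The paper's gap is a first-order, small-scale effect: \(M(A)=\frac2{15}\) against \(\frac12V_1'(A)=\frac3{20}\). That is why \(A\) is shrunk rather than enlarged. Your heuristic that the flat metric sees only one-dimensional widths is wrong. The hexagonal unit ball of \(\Pi\) has area \(\frac34\) in the coordinates \((x_1,x_2)\), so testing normalized area measure in \eqref{eq:magnitude-capacity} gives \(|R\Delta|\geq R^2/3\) for every \(R\). This is close to the leading term \(\frac38R^2\) of \(W(R\Delta)\). A large-scale proof would therefore hinge on the sharp constant in the area asymptotics of magnitude for the hexagonal plane, together with a matching lower bound for the slabs. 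Your outline provides neither.
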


\begin{proof}
Let
\[
 \Delta=\left\{x\in\mathbb R_+^3:
                    x_1+x_2+x_3=1\right\},
 \qquad
 A=\frac1{10}\Delta,
\]
with the metric induced from \(\ell_1^3\), and set
\[
 C=\left[-\frac12,\frac12\right]^3,
 \qquad
 A_\varepsilon=A+\varepsilon C.
\]
Every \(A_\varepsilon\) is a convex body, \(A\) is compact and convex,
and
\[
        d_H(A_\varepsilon,A)\leq\frac32\varepsilon.
\]

For the interval-indicator realization \(J\) of \(\ell_1^3\), put
\[
        c=\frac13(Je_1+Je_2+Je_3).
\]
If \(x\in\Delta\), then
\[
 \|Jx-c\|^2
 =\sum_{i=1}^3
   \left(\frac49x_i+\frac19(1-x_i)\right)
 =\frac23.
\]
Since \(c\in\operatorname{aff}J(\Delta)\), the spherical energy identity
gives
\[
        M(\Delta)=\frac43,
        \qquad
        M(A)=\frac2{15}.
\]
The exponential magnitude bound therefore implies
\[
        |A|\leq e^{2/15}.
\]
Moreover,
\[
 e^{2/15}
 <
 1+\frac2{15}
 +\frac12\left(\frac2{15}\right)^2
 +\frac{(2/15)^3}{6(1-2/15)}
 =\frac{10027}{8775}
 <\frac{923}{800}.
\]

For a compact convex \(K\subset\ell_1^3\), write
\[
        W(K)=\sum_{i=0}^3 2^{-i}V_i'(K),
\]
where \(V_i'(K)\) denotes its \(i\)-th \(\ell_1\)-intrinsic volume.
For the simplex \(\Delta\),
the coordinate-projection definition of the \(\ell_1\)-intrinsic
volumes gives the required values directly.  Each of its three
one-coordinate projections is the interval \([0,1]\), and each of its
three two-coordinate projections is the triangle
\[
 \{(x,y)\in\mathbb R_+^2:x+y\leq1\},
\]
which has area \(1/2\); moreover, \(\Delta\) has zero three-dimensional
volume.  Hence
\[
 \bigl(V_0'(\Delta),V_1'(\Delta),
       V_2'(\Delta),V_3'(\Delta)\bigr)
 =\left(1,3,\frac32,0\right).
\]
Homogeneity therefore gives
\[
        W(A)
        =1+\frac12\frac3{10}
             +\frac14\frac{3}{200}
        =\frac{923}{800}.
\]

Leinster and Meckes proved that
\[
        |K|=W(K)
\]
for every convex body \(K\subset\ell_1^3\)
\cite[Theorem~4.6(2)]{LeinsterMeckes}. Hence
\[
        |A_\varepsilon|=W(A_\varepsilon).
\]
The \(\ell_1\)-intrinsic volumes are Hausdorff-continuous, and so
\[
 \lim_{\varepsilon\downarrow0}|A_\varepsilon|
 =\lim_{\varepsilon\downarrow0}W(A_\varepsilon)
 =W(A)
 =\frac{923}{800}
 >|A|.
\]
\end{proof}

Leinster and Meckes also conjectured that
\[
        |K|=\sum_{i=0}^n2^{-i}V_i'(K)
\]
for every compact \(\ell_1\)-convex set
\cite[Conjecture~4.5]{LeinsterMeckes}. The same example disproves this
conjecture, including its compact-convex case.

\begin{corollary}[Failure of the \(\ell_1\)-intrinsic-volume formula]
\label{cor:l1-intrinsic-volume-counterexample}
The Leinster--Meckes intrinsic-volume conjecture is false for compact
convex subsets of \(\ell_1^3\).
\end{corollary}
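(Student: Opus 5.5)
The plan is to reuse the set \(A=\frac1{10}\Delta\subset\ell_1^3\) from Theorem~\ref{thm:convex-hausdorff-discontinuity} as the counterexample. First, \(A\) is a nonempty compact convex subset of \(\ell_1^3\). It is therefore \(\ell_1\)-convex: segments are \(\ell_1\)-geodesics, so it satisfies the hypotheses of \cite[Conjecture~4.5]{LeinsterMeckes} with \(n=3\). The conjecture would then assert
\[
 |A|=\sum_{i=0}^3 2^{-i}V_i'(A)=W(A).
\]

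Next, I will simply quote the two computations from the proof of Theorem~\ref{thm:convex-hausdorff-discontinuity}. The coordinate-projection computation gives \(W(A)=923/800\). Separately, the spherical realization \(x\mapsto Jx-c\), with \(c\in\aff J(\Delta)\) and hence centered, gives \(M(A)=2/15\). The exponential bound \eqref{eq:magnitude-upper-bound} from the proof of Theorem~\ref{thm:first-variation}, applied at \(t=1\), then yields
\[
 |A|\le e^{M(A)}=e^{2/15}<\tfrac{10027}{8775}<\tfrac{923}{800}=W(A).
\]
So \(|A|\neq W(A)\), and the conjecture fails for this compact convex set.

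The only point needing care is that Theorem~\ref{thm:first-variation} was proved for compact spaces of negative type, so \(A\) must be of negative type. This holds because \(\ell_1^3\) embeds isometrically as a square-distance subset of Hilbert space via \(J\). The upper bound \(\mu_Y(t)\le e^{tM(Y)}\) was derived under \(M(Y)<\infty\), which holds here.

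I would also check that the rational bound \(e^{2/15}<10027/8775\) is valid. The tail of the exponential series is dominated by a geometric series with ratio \(2/15\), and that is exactly the fourth summand used there. The comparison \(10027\cdot800<923\cdot8775\) is arithmetic.

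I expect no real obstacle: the corollary is a restatement of the strict inequality already established in the theorem.
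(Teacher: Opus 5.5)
Your proposal is correct and matches the paper's proof. You take \(A=\Delta/10\), note that it is compact and convex and hence \(\ell_1\)-convex, and combine \(|A|\le e^{M(A)}=e^{2/15}<923/800=\sum_{i=0}^3 2^{-i}V_i'(A)\) from the proof of Theorem~\ref{thm:convex-hausdorff-discontinuity}. Your extra checks (negative type via \(J\), finiteness of \(M(A)\), and the rational bound for \(e^{2/15}\)) only make explicit what the paper leaves implicit.
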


\begin{proof}
The set \(A=\Delta/10\) is compact and convex, hence
\(\ell_1\)-convex, but the preceding computation gives
\[
        |A|
        \leq e^{2/15}
        <\frac{923}{800}
        =\sum_{i=0}^3 2^{-i}V_i'(A).
\]
Thus the conjectured identity fails for \(A\).
\end{proof}

Katsumasa, Roff, and Yoshinaga ask
\cite[Question~1.2]{KatsumasaRoffYoshinaga}:
\begin{quote}
Given a positive definite metric space \(W\), is magnitude continuous
with respect to the Hausdorff topology on the set of compact subspaces
of \(W\)?
\end{quote}

\begin{theorem}[Discontinuity in a fixed compact ambient space]
\label{thm:fixed-ambient-discontinuity}
There is a compact positive definite metric space \(W\) for which
magnitude is not Hausdorff-continuous on the set of compact subspaces
of \(W\). Thus Question~1.2 has a negative answer.
\end{theorem}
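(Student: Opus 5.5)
The plan is to reuse the sets from Theorem~\ref{thm:convex-hausdorff-discontinuity} inside one ambient space. Take
\[
 W=A+C=\frac1{10}\Delta+\left[-\frac12,\frac12\right]^3\subset\ell_1^3,
\]
with the induced \(\ell_1\)-metric. This \(W\) is a compact convex body. It is positive definite because \(\ell_1^3\) is a positive definite normed space, and positive definiteness of finite similarity matrices passes to subspaces. For \(0<\varepsilon\leq1\), the sets \(A_\varepsilon=A+\varepsilon C\) and \(A\) are compact subsets of \(W\), since \(\varepsilon C\subseteq C\) and \(0\in C\).

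By the proof of Theorem~\ref{thm:convex-hausdorff-discontinuity}, \(d_H(A_\varepsilon,A)\leq\frac32\varepsilon\to0\). The Hausdorff distance of the subsets computed in \(W\) agrees with the one computed in \(\ell_1^3\), because the metric on \(W\) is the restriction. Magnitude is intrinsic to the metric space, so \(|A_\varepsilon|\) and \(|A|\) do not depend on the ambient space. The same proof gives
\[
 \lim_{\varepsilon\downarrow0}|A_\varepsilon|=W(A)=\frac{923}{800}>e^{2/15}\geq|A|.
\]
Hence magnitude is not Hausdorff-continuous at \(A\) in the hyperspace of compact subsets of the fixed compact positive definite space \(W\).

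The only step that needs care is verifying that \(W\) qualifies as a \emph{positive definite metric space} in the sense of Katsumasa--Roff--Yoshinaga. I expect their definition to be the finite-subset one recorded in the preliminaries, which is inherited by subspaces. If they instead require compact positive definiteness in Meckes's sense, compact subsets of a positive definite normed space still satisfy it, by Meckes's results on positive definite spaces \cite{MeckesPositive}.

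I would also note that \(W\) is a convex body, and it has finite maximal energy since it is a subset of a scaled ball \(rB_{\ell_1^3}\). So the example also shows that boundedness and tractability-type hypotheses on the ambient space do not rescue continuity.
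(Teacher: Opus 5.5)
Your proposal is correct and is essentially the paper's proof: reuse \(A=\frac1{10}\Delta\) and \(A_\varepsilon\) from Theorem~\ref{thm:convex-hausdorff-discontinuity} inside a fixed compact subspace of \(\ell_1^3\), and note that the Hausdorff distance and magnitude are unchanged by the restricted metric. The only difference is your ambient space, \(W=A+[-\frac12,\frac12]^3\), where the paper uses \(W=[-1,1]^3\); this is immaterial, since both are compact positive definite subspaces of \(\ell_1^3\) containing \(A\) and every \(A_\varepsilon\) for small \(\varepsilon\).
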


\begin{proof}
Let
\[
        W=[-1,1]^3\subset\ell_1^3.
\]
This is a compact positive definite metric space. Retain the compact
convex sets
\[
 A=\frac1{10}\Delta,
 \qquad
 A_\varepsilon
 =A+\varepsilon\left[-\frac12,\frac12\right]^3
\]
from Theorem~\ref{thm:convex-hausdorff-discontinuity}. For every
sufficiently small \(\varepsilon>0\), both \(A\) and \(A_\varepsilon\)
are compact subspaces of \(W\), and
\[
        d_H^W(A_\varepsilon,A)\longrightarrow0.
\]
Nevertheless, the preceding computation gives
\[
 \lim_{\varepsilon\downarrow0}|A_\varepsilon|
 =\frac{923}{800}
 >|A|.
\]
Hence magnitude is discontinuous on the compact subspaces of the single
compact space \(W\).
\end{proof}

For a metric space \(T\), let \(\operatorname{Fin}_+(T)\) and
\(\operatorname{Cmp}_+(T)\) denote its nonempty finite and nonempty
compact subspaces, respectively, equipped with the Hausdorff metric.
Recall that \(T\) is \emph{tractable} if it is positive definite, every
closed ball in \(T\) is compact, and every such ball has finite
magnitude.

Kališnik and Lešnik conjecture
\cite[Conjecture~5.1]{KalisnikLesnikTractable}:
\begin{quote}
If \(T\subseteq L_1\) is tractable, then magnitude is continuous on
both \(\operatorname{Fin}_+(T)\) and \(\operatorname{Cmp}_+(T)\).
\end{quote}
The compact-subspace assertion is false, even when \(T\) is bounded.

\begin{theorem}[Discontinuity in a bounded tractable space]
\label{thm:tractable-compact-discontinuity}
There is a bounded tractable metric subspace \(T\subseteq L_1\) such
that
\[
        K\longmapsto |K|,
        \qquad K\in\operatorname{Cmp}_+(T),
\]
is not Hausdorff-continuous. In particular, the compact-subspace
assertion of Kališnik--Lešnik Conjecture~5.1 is false.
\end{theorem}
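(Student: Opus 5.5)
Take \(T=W=[-1,1]^3\subset\ell_1^3\), the compact space of Theorem~\ref{thm:fixed-ambient-discontinuity}, viewed inside \(L_1\). This works because \(\ell_1^3\) embeds isometrically in \(L_1=L_1[0,1]\): send the \(i\)-th coordinate vector to \(3\,\mathbf 1_{[(i-1)/3,\,i/3)}\), so that \(\|\sum_i x_ie_i\|_{\ell_1}=\|\sum_i 3x_i\mathbf 1_{[(i-1)/3,i/3)}\|_{L_1}\). Hence \(T\) is a bounded metric subspace of \(L_1\). Discontinuity on \(\operatorname{Cmp}_+(T)\) is then exactly Theorem~\ref{thm:fixed-ambient-discontinuity}, since Hausdorff distance and magnitude are intrinsic to the metric. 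Concretely, \(A_\varepsilon\to A\) in \(\operatorname{Cmp}_+(T)\), while \(|A_\varepsilon|\to 923/800>|A|\).

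It remains to verify tractability of \(T\), and this is the main point to check.

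\emph{Positive definiteness.} \(\ell_1^3\) is a positive definite normed space; for instance, the Schoenberg realization \(J\) shows it has negative type. Strict positive definiteness for finite sets of distinct points is the standard fact invoked in Theorem~\ref{thm:finite-subset-continuity}. It passes to the subspace \(T\).

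\emph{Compact balls.} Every closed ball \(\overline B_T(x,\rho)=T\cap\overline B_{\ell_1^3}(x,\rho)\) is closed and bounded in \(\mathbb R^3\), hence compact.

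\emph{Finite magnitude of balls.} Each such ball is contained in \(T=[-1,1]^3\subseteq 3B_{\ell_1^3}\). By monotonicity of magnitude for positive definite spaces, \(|\overline B_T(x,\rho)|\le|3B_{\ell_1^3}|\). The exponential magnitude bound \eqref{eq:magnitude-upper-bound} gives \(|3B_{\ell_1^3}|\le e^{3M(B_{\ell_1^3})}\). Finally, \(M(B_{\ell_1^3})<\infty\) by the spherical realization argument in the proof of Theorem~\ref{thm:finite-subset-continuity}. One could also bound \(|[-1,1]^3|\) directly by the Leinster--Meckes cube formula, but the energy bound avoids that.

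The only delicate point is making sure the tractability definition (positive definiteness, compactness and finite magnitude of closed balls) is met in the ambient \(T\) rather than in \(L_1\); the three steps above check this directly in \(T\). The conclusion then follows, and the same example refutes the compact-subspace part of Conjecture~5.1.
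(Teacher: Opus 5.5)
Your proposal is correct and follows the paper's proof: you use the same bounded space $T=[-1,1]^3\subset\ell_1^3\subset L_1$ and the same sets $A$ and $A_\varepsilon$, and you transfer Theorem~\ref{thm:convex-hausdorff-discontinuity} to $T$ because Hausdorff distance and magnitude depend only on the metric. The one difference is how tractability is obtained: the paper cites Kali\v{s}nik--Le\v{s}nik's result that every closed subspace of $\ell_1^3$ is tractable, while you verify it directly from monotonicity, the inclusion $T\subseteq 3B_{\ell_1^3}$, the bound $|tY|\le e^{tM(Y)}$, and $M(B_{\ell_1^3})<\infty$, which is a valid self-contained substitute.
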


\begin{proof}
Identify \(\ell_1^3\) isometrically with a finite-dimensional subspace
of \(L_1\), and let
\[
        T=[-1,1]^3\subset\ell_1^3.
\]
Thus \(T\) is a bounded closed metric subspace of \(\ell_1^3\).
Kališnik and Lešnik prove that every closed metric subspace of
\(\ell_1^3\) is tractable
\cite[Corollary~3.5]{KalisnikLesnikTractable}; hence \(T\) is
tractable.

Retain the compact convex sets
\[
 A=\frac1{10}\Delta,
 \qquad
 A_\varepsilon
 =A+\varepsilon\left[-\frac12,\frac12\right]^3.
\]
For every sufficiently small \(\varepsilon>0\), both sets lie in \(T\).
Since \(T\) carries the restricted \(\ell_1^3\)-metric,
\[
        d_H^T(A_\varepsilon,A)
        =d_H^{\ell_1^3}(A_\varepsilon,A)
        \longrightarrow0.
\]
Nevertheless, Theorem~\ref{thm:convex-hausdorff-discontinuity} gives
\[
        \lim_{\varepsilon\downarrow0}|A_\varepsilon|
        =\frac{923}{800}
        >|A|.
\]
Thus magnitude is not continuous on
\(\operatorname{Cmp}_+(T)\).
\end{proof}

Kališnik and Lešnik further conjecture
\cite[Conjecture~5.2]{KalisnikLesnikTractable} that, for every
tractable \(T\subseteq L_1\), magnitude on
\(\operatorname{Cmp}_+(T)\) is Lipschitz if and only if \(T\) is
bounded. The same space disproves the forward claim from boundedness.

\begin{corollary}[Failure of bounded tractable Lipschitz continuity]
\label{cor:tractable-lipschitz-counterexample}
The implication
\[
 T\ \text{bounded}
 \quad\Longrightarrow\quad
 \bigl(K\mapsto|K|\bigr)
 \ \text{is Lipschitz on }\operatorname{Cmp}_+(T)
\]
in Kališnik--Lešnik Conjecture~5.2 is false.
\end{corollary}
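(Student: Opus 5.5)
The plan is to use the bounded tractable space \(T=[-1,1]^3\subset\ell_1^3\subseteq L_1\) from Theorem~\ref{thm:tractable-compact-discontinuity} as the counterexample. The reason is simple. A Lipschitz map on the metric space \(\operatorname{Cmp}_+(T)\), equipped with the Hausdorff metric, is in particular continuous. So it suffices to exhibit a bounded tractable \(T\) on which \(K\mapsto|K|\) fails to be Hausdorff-continuous on \(\operatorname{Cmp}_+(T)\), and Theorem~\ref{thm:tractable-compact-discontinuity} supplies exactly such a \(T\).

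The steps, in order, are as follows.

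\textbf{Step 1.} Recall from the proof of Theorem~\ref{thm:tractable-compact-discontinuity} that \(T\) is a closed, bounded subspace of \(\ell_1^3\). Via the isometric embedding \(\ell_1^3\hookrightarrow L_1\), it is a subspace of \(L_1\). By \cite[Corollary~3.5]{KalisnikLesnikTractable} it is tractable. Its boundedness is immediate: its diameter is \(6\).

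\textbf{Step 2.} Suppose, for contradiction, that there were \(L<\infty\) with
\[
  \bigl||K|-|K'|\bigr|\leq L\,d_H(K,K')
  \qquad\text{for all } K,K'\in\operatorname{Cmp}_+(T).
\]
Apply this to \(K=A_\varepsilon\) and \(K'=A\), which lie in \(T\) for small \(\varepsilon\). Since \(d_H(A_\varepsilon,A)\leq\tfrac32\varepsilon\), this gives
\[
  \bigl||A_\varepsilon|-|A|\bigr|\leq\tfrac32L\varepsilon\longrightarrow0 .
\]

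\textbf{Step 3.} This contradicts Theorem~\ref{thm:convex-hausdorff-discontinuity}, which gives
\[
  \lim_{\varepsilon\downarrow0}|A_\varepsilon|=\tfrac{923}{800}>e^{2/15}\geq|A| .
\]
Hence the antecedent of the implication holds for \(T\) while its consequent fails, so the implication in Conjecture~5.2 is false.

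There is essentially no obstacle, since all the analytic work is already contained in Theorems~\ref{thm:convex-hausdorff-discontinuity} and~\ref{thm:tractable-compact-discontinuity}. The only point to check is that the Hausdorff metric on \(\operatorname{Cmp}_+(T)\) is the one induced by the restricted \(\ell_1^3\)-metric, so that \(d_H^T(A_\varepsilon,A)=d_H^{\ell_1^3}(A_\varepsilon,A)\). This was already noted in the proof of Theorem~\ref{thm:tractable-compact-discontinuity}.
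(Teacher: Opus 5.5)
Your proposal is correct and matches the paper's proof. Both take the bounded tractable space \(T=[-1,1]^3\) from Theorem~\ref{thm:tractable-compact-discontinuity} and note that a Lipschitz map on \(\operatorname{Cmp}_+(T)\) would be Hausdorff-continuous, which that theorem rules out; your Steps~2--3 just spell this out with the explicit sequence \(A_\varepsilon\to A\).
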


\begin{proof}
The tractable space \(T=[-1,1]^3\) is bounded, but
Theorem~\ref{thm:tractable-compact-discontinuity} shows that magnitude
is not continuous on \(\operatorname{Cmp}_+(T)\). It therefore cannot
be Lipschitz there.
\end{proof}

For bi-Lipschitz equivalent compact metric spaces \(P\) and \(Q\), write
\[
 d_{\mathrm{Lip}}(P,Q)
 =
 \inf_f
 \log\max\!\left\{
     \operatorname{Lip}(f),
     \operatorname{Lip}(f^{-1})
 \right\},
\]
where the infimum is over bi-Lipschitz bijections \(f:P\to Q\).
Its bounded version is obtained by truncating this metric at \(1\);
the two metrics have the same convergent sequences.

The Magnitude 2019 problem list records the following question of
Meckes \cite[Question~1]{Magnitude2019Problems}:
\begin{quote}
Is magnitude continuous with respect to the bounded Lipschitz distance
on the collection of metric spaces of negative type homeomorphic to a
given space?
\end{quote}
The following gives a negative answer, even under simultaneous
Hausdorff convergence inside one compact ambient space.
\begin{theorem}[Discontinuity in the Lipschitz topology]
\label{thm:bilipschitz-magnitude-discontinuity}
There are compact spaces \(X,X_1,X_2,\ldots\) of negative type, all
contained in one compact space \(W\) of negative type, such that
\[
 d_{\mathrm{Lip}}(X_n,X)\longrightarrow0,
 \qquad
 d_H^W(X_n,X)\longrightarrow0,
\]
but
\[
 |X|<\infty,
 \qquad
 |X_n|\longrightarrow\infty.
\]
In particular, every \(X_n\) is homeomorphic to \(X\).
\end{theorem}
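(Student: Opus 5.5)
The plan is to build \(W\) as a metric wedge of countably many tiny finite pieces, together with a factor-\(\lambda\) perturbation of each piece. Wedges suit this for two reasons: magnitude is additive over them, \(|P\vee Q|=|P|+|Q|-1\) (as used in the proof of Theorem~\ref{thm:finite-concentration-counterexample}), and by Schoenberg negative type is preserved under wedges, since a wedge of Hilbert-squared realizations can be realized in an orthogonal sum.

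Concretely, I would look for a sequence of finite metric spaces \(G_k\) of negative type, with diameters \(\operatorname{diam}G_k\to0\), together with metrics \(G_k'\) on the same point sets satisfying \(\lambda_k^{-1}d_{G_k}\leq d_{G_k'}\leq\lambda_k d_{G_k}\), where \(\lambda_k\downarrow1\). The required magnitude behaviour is
\[
 \sum_k\bigl(|G_k|-1\bigr)<\infty,
 \qquad
 |G_k'|-1\geq 1\quad\text{for all }k.
\]
I would then set \(X=\bigvee_kG_k\) and \(X_n=\bigl(\bigvee_{k<n}G_k\bigr)\vee\bigl(\bigvee_{k\geq n}G_k'\bigr)\), both pointed at a common basepoint. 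The identity map is then a bijection with \(d_{\mathrm{Lip}}(X_n,X)\leq\log\lambda_n\to0\), because wedge distances pass through the basepoint and each branch is distorted by at most \(\lambda_n\). For the ambient space I would take \(W=\bigvee_k(G_k\vee G_k')\), gluing each pair along the basepoint. Then \(W\) is compact because the diameters tend to zero, and it has negative type as a wedge. Both \(X\) and \(X_n\) sit inside \(W\), and \(d_H^W(X_n,X)\leq2\sup_{k\geq n}\operatorname{diam}G_k'\to0\). Finally, the capacity formula \eqref{eq:magnitude-capacity} together with the supremum over finite subsets \eqref{eq:RW-magnitude-function} extends wedge additivity to countable wedges. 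This gives \(|X|=1+\sum_k(|G_k|-1)<\infty\) and \(|X_n|\geq\sum_{k\geq n}(|G_k'|-1)=\infty\). In fact this yields \(|X_n|=\infty\) outright, which certainly gives \(|X_n|\to\infty\); if finite values are wanted, one can keep only the finitely many replaced pieces with \(n\leq k\leq 2^n\).

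The main obstacle is the choice of the pieces. Each \(G_k\) must be tiny and have magnitude close to \(1\), while a metric within factor \(\lambda_k\) of it has magnitude at least \(2\). The exponential bound \(|tY|\leq e^{tM(Y)}\) from \eqref{eq:magnitude-upper-bound} shows that the pieces \(G_k'\) cannot have bounded maximal energy. So the pieces must be nearly degenerate for negative type: \(G_k'\) should have huge maximal energy at tiny diameter, whereas the nearby \(G_k\) should not. My first attempt would be to perturb a configuration on the boundary of negative type, of the kind appearing in the proofs of Theorems~\ref{thm:infinite-small-scale} and~\ref{thm:nonmonotone-magnitude}. There \eqref{eq:nonmonotone-distance-form} shows that the energy form on \(\mathbf1^\perp\) has a direction whose negativity is controlled by \(\varepsilon\). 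By Theorem~\ref{thm:first-variation}, \(M\) blows up as this direction degenerates, so I would take \(G_k\) with that direction safely negative and \(G_k'\) with \(\varepsilon\) chosen so that \(M(G_k')\gtrsim1/\operatorname{diam}G_k'\). I would also choose \(G_k\) on the same point set with bounded \(M\), so that \(|G_k|-1\lesssim\operatorname{diam}G_k\cdot M(G_k)\) is summable. The remaining check is that the two metrics differ by a multiplicative factor tending to \(1\). This is plausible because the degeneracy is a codimension-one condition reachable by an arbitrarily small relative change of the distances. The lower bound \(|G_k'|\geq2\) would then come from the capacity formula, by testing against the explicit near-null signed measure.
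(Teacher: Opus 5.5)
\textbf{Verdict: there is a genuine gap.} Your scaffolding is sound and is essentially the paper's. It uses tiny finite branches wedged at one basepoint, computes the magnitude of a countable wedge through finite subwedges, and measures distortion branch by branch. Your ambient space $\bigvee_k(G_k\vee G_k')$ also works, and is simpler than the paper's $\ell_1$-product device, because the Hausdorff distance is controlled by branch diameters.

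The gap is the choice of pieces, which is the whole content of the theorem, and your plan for it cannot work as stated. Write $\widehat M(G)=M(G)/\operatorname{diam}G$ for the scale-invariant maximal energy.
\begin{itemize}
\item \emph{Bounded $M$ on the good pieces is incompatible with $\lambda_k\to1$.} In the families you point to, e.g.\ the $\varepsilon$-family of Theorem~\ref{thm:nonmonotone-magnitude}, making $\widehat M(G_k')$ huge means pushing the normalized $G_k'$ toward the degenerate member $X_0$. Then $\lambda_k\to1$ forces the normalized $G_k$ toward $X_0$ as well. But $M(X_0)=\infty$: the null direction $(\mathbf1,-\mathbf1)$ satisfies $D_0(\mathbf1,-\mathbf1)=-\tfrac34\mathbf1$. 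Also, $M$ is a supremum of functions linear in the distance matrix, so it is lower semicontinuous. Hence $\widehat M(G_k)\to\infty$ too.
\item \emph{Huge $M$ does not give $|G_k'|\geq2$.} Large $M$ only permits large magnitude: $e^{tM}$ is an upper bound, and Theorem~\ref{thm:first-variation} concerns only the derivative at $t=0$.
\item \emph{The near-null test measure is inconclusive.} Testing the capacity formula at a positive scale on $\nu_0+sz_0$ does not settle anything. The first-order gain $tI$ grows linearly in $ts$, while the Taylor remainder grows quadratically in $ts$. So the remainder is of order one exactly when the gain becomes useful.
\end{itemize}

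The missing idea is that the bad piece should not be tuned at all, and summability should come from the scale rather than from $M$.
\begin{itemize}
\item \emph{Bad pieces.} Take one fixed finite space of negative type that \emph{fails the one-point property}. Willerton's six-point space $Y$ has $|tY|=6/(1+4e^{-t})\to6/5$. So $|aY|\approx6/5$ for every small $a$, even though $M(Y)=\infty$.
\item \emph{Good pieces.} Let $Y_k$ add $1/k$ to every nonzero distance of $Y$. Then $Y_k$ has strict negative type, hence finite $M$. By $1\leq|tY_k|\leq e^{tM(Y_k)}$, it has the one-point property.
\item \emph{Distortion.} Nonzero distances of $Y$ are at least $1$, so the identity $Y_k\to Y$ has distortion at most $1+1/k$, at every scale.
\item \emph{Choice of scale.} Because distortion is scale-invariant, you may choose $a_k<2^{-k}$ \emph{after} $k$, so small that $|a_kY_k|-1<2^{-k-1}$ and $\bigl||a_kY|-\tfrac65\bigr|<2^{-k-1}$.
\end{itemize}
With $G_k=a_kY_k$ and $G_k'=a_kY$, your scaffolding gives a per-branch excess $|G_k'|-|G_k|>\tfrac15-2^{-k}$. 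That suffices, so the target $|G_k'|\geq2$ is unnecessary. The good pieces do have unbounded $\widehat M$, but the freely chosen scale absorbs it.
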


\begin{proof}
We begin with the six-point metric space due to Willerton and recorded
by Leinster \cite[Example~2.2.8, p.~876]{Leinster}.  Let
\[
 Y=A_0\sqcup B_0,
 \qquad
 A_0=\{a_1,a_2,a_3\},
 \qquad
 B_0=\{b_1,b_2,b_3\},
\]
and give distinct points of \(A_0\) distance \(1\), distinct points of
\(B_0\) distance \(2\), and points in different blocks distance \(1\).

We first verify the negative-type property needed below.  Let
\((u,v)\in\mathbb R^3\oplus\mathbb R^3\) have total mass zero, and put
\[
 a=\mathbf1^Tu=-\mathbf1^Tv.
\]
If \(D_Y\) denotes the distance matrix of \(Y\), then
\[
 (u,v)^TD_Y(u,v)
 =a^2-\|u\|^2-2\|v\|^2.
\]
By the Cauchy--Schwarz inequality,
\[
 \|u\|^2\geq\frac{a^2}{3},
 \qquad
 \|v\|^2\geq\frac{a^2}{3}.
\]
It follows that
\[
 (u,v)^TD_Y(u,v)
 \leq
 a^2-\frac{a^2}{3}-\frac{2a^2}{3}
 =0,
\]
so \(Y\) is of negative type.

For completeness, we also recall the magnitude calculation.  Put
\(q=e^{-t}\).  By symmetry, the weighting on \(tY\) is constant on
each of the two blocks; if its values are \(\alpha\) on \(A_0\) and
\(\beta\) on \(B_0\), then
\[
 \begin{pmatrix}
  1+2q & 3q\\
  3q   & 1+2q^2
 \end{pmatrix}
 \begin{pmatrix}\alpha\\ \beta\end{pmatrix}
 =
 \begin{pmatrix}1\\1\end{pmatrix}.
\]
Since
\[
 (1+2q)(1+2q^2)-9q^2
 =(1-q)^2(1+4q),
\]
solving these equations gives
\begin{equation}
\label{eq:six-point-magnitude}
 |tY|=3(\alpha+\beta)
     =\frac{6}{1+4e^{-t}},
 \qquad
 \lim_{t\downarrow0}|tY|=\frac65.
\end{equation}

For each \(k\geq1\), let \(Y_k\) be obtained from \(Y\) by adding
\(1/k\) to every nonzero distance:
\[
 d_{Y_k}(x,y)
 =
 d_Y(x,y)+\frac1k\mathbf1_{x\neq y}.
\]
If \(h\neq0\) has total mass zero, then
\[
 h^TD_{Y_k}h
 =
 h^TD_Yh-\frac1k\|h\|^2<0.
\]
Thus \(Y_k\) has strict negative type and consequently has the
one-point property:
\[
 \lim_{t\downarrow0}|tY_k|=1.
\]

Choose \(0<a_k<2^{-k}\) sufficiently small that
\begin{equation}
\label{eq:branch-magnitude-estimates}
 0\leq |a_kY_k|-1<2^{-k-1},
 \qquad
 \left||a_kY|-\frac65\right|<2^{-k-1}.
\end{equation}
Point \(Y\) and every \(Y_k\) at the same chosen point, and form the
pointed wedge
\[
 X=\bigvee_{k\geq1}a_kY_k.
\]
Each branch is finite, and
\[
 \operatorname{diam}(a_kY_k)
 \leq 3a_k<3\cdot2^{-k}.
\]
Given any sequence in \(X\), either some branch contains infinitely
many of its terms, in which case it has a constant subsequence, or
there is a subsequence whose branch indices tend to infinity, in
which case that subsequence converges to the common basepoint.
Therefore \(X\) is compact.  The orthogonal-sum construction for
pointed Schoenberg realizations shows that \(X\) is of negative type.

Every finite subset of \(X\) is contained in a finite subwedge.
Hence, by the definition of magnitude for compact spaces of negative
type, inclusion monotonicity, and the finite wedge formula,
\begin{align*}
 |X|
 &=
 \sup_{N\geq1}
 \left|\bigvee_{k=1}^N a_kY_k\right| \\
 &=
 1+\sum_{k\geq1}\bigl(|a_kY_k|-1\bigr)
 <\frac32.
\end{align*}
In particular, \(X\) has finite magnitude.

For \(n\geq1\), put
\[
 I_n=\{n,n+1,\ldots,2n-1\},
\]
and replace the branches indexed by \(I_n\) with copies of \(a_kY\):
\[
 X_n=
 \left(\bigvee_{k\notin I_n}a_kY_k\right)
 \vee
 \left(\bigvee_{k\in I_n}a_kY\right).
\]
The branch diameters again tend to zero, so the preceding argument
shows that every \(X_n\) is compact and of negative type.  The
finite-subwedge argument and \eqref{eq:branch-magnitude-estimates}
give
\begin{align*}
 |X_n|-|X|
 &=
 \sum_{k=n}^{2n-1}
 \bigl(|a_kY|-|a_kY_k|\bigr) \\
 &>
 \sum_{k=n}^{2n-1}
 \left(\frac15-2^{-k}\right) \\
 &>
 \frac n5-2^{1-n}.
\end{align*}
Consequently,
\[
 |X_n|\longrightarrow\infty.
\]

Let \(f_n:X\to X_n\) be the natural branchwise bijection.  It is the
identity on every unchanged branch and identifies \(Y_k\) with \(Y\)
on each branch indexed by \(I_n\).  On such a branch,
\[
 d_Y(x,y)
 \leq d_{Y_k}(x,y)
 =d_Y(x,y)+\frac1k
 \leq\left(1+\frac1k\right)d_Y(x,y),
\]
because every nonzero distance in \(Y\) is at least \(1\).
Distances between different wedge branches are sums of distances to
the common basepoint, so the same estimates hold globally.  Hence
\[
 \operatorname{Lip}(f_n)\leq1,
 \qquad
 \operatorname{Lip}(f_n^{-1})\leq1+\frac1n.
\]
In fact \(\operatorname{Lip}(f_n)=1\), since unchanged branches
remain.  It follows that
\[
 d_{\mathrm{Lip}}(X_n,X)
 \leq
 \log\left(1+\frac1n\right)
 \longrightarrow0.
\]
The same maps \(f_n\) show that \(X_n\) is homeomorphic to \(X\).

It remains to place all these spaces in a single compact space of
negative type.  Let
\[
 E=\{e_y:y\in Y\}
\]
carry the six-point discrete metric, and let \(e_\ast\) correspond to
the chosen basepoint \(\ast\in Y\).  Define
\[
 V_k=Y\times_1\frac1kE
\]
and the pointed subsets
\[
 D_k=\{(y,e_y):y\in Y\},
 \qquad
 C_k=\{(y,e_\ast):y\in Y\}.
\]
For \(y,y'\in Y\),
\[
 d_{V_k}\bigl((y,e_y),(y',e_{y'})\bigr)
 =
 d_Y(y,y')+\frac1k\mathbf1_{y\neq y'}
 =
 d_{Y_k}(y,y'),
\]
whereas
\[
 d_{V_k}\bigl((y,e_\ast),(y',e_\ast)\bigr)
 =
 d_Y(y,y').
\]
Thus \(D_k\) and \(C_k\) are pointed isometric copies of \(Y_k\) and
\(Y\), respectively.  Pairing \((y,e_y)\) with \((y,e_\ast)\) also
gives
\[
 d_H^{V_k}(D_k,C_k)\leq\frac1k.
\]

Finally, form the pointed wedge
\[
 W=\bigvee_{k\geq1}a_kV_k.
\]
Each \(V_k\) is of negative type because its metric is the sum of two
negative-type metrics.  Placing pointed Schoenberg realizations of
the branches in mutually orthogonal Hilbert summands therefore shows
that \(W\) is of negative type.

The diameters of the spaces \(V_k\) are uniformly bounded, while
\(a_k<2^{-k}\); hence the diameters of the branches \(a_kV_k\) tend
to zero.  Given any sequence in \(W\), either some branch contains
infinitely many of its terms, yielding a convergent subsequence since
that branch is finite, or there is a subsequence whose branch indices
tend to infinity.  In the latter case the subsequence converges to
the common basepoint.  Thus \(W\) is sequentially compact and hence
compact.

Inside \(W\), the space \(X\) is the wedge of the subsets \(a_kD_k\).
The space \(X_n\) is obtained by replacing \(a_kD_k\) by \(a_kC_k\)
for \(k\in I_n\).  Consequently,
\[
 d_H^W(X_n,X)
 \leq
 \max_{k\in I_n}\frac{a_k}{k}
 \leq
 \frac{2^{-n}}{n}
 \longrightarrow0.
\]
This completes the proof.
\end{proof}

For a compact metric space \(K\), let \(N(K,\rho)\) be the least
number of closed \(\rho\)-balls required to cover \(K\). Its upper and
lower Minkowski dimensions are
\[
 \overline{\dim}_{\mathrm{Mink}}K
 =
 \limsup_{\rho\downarrow0}
 \frac{\log N(K,\rho)}{\log(1/\rho)},
 \qquad
 \underline{\dim}_{\mathrm{Mink}}K
 =
 \liminf_{\rho\downarrow0}
 \frac{\log N(K,\rho)}{\log(1/\rho)}.
\]
For a compact space \(K\) of negative type whose magnitude is finite
at every scale, define
\[
 \overline{\dim}_{\mathrm{Mag}}K
 =
 \limsup_{t\to\infty}
 \frac{\log|tK|}{\log t},
 \qquad
 \underline{\dim}_{\mathrm{Mag}}K
 =
 \liminf_{t\to\infty}
 \frac{\log|tK|}{\log t}.
\]

Meckes asks whether magnitude dimension agrees with Minkowski
dimension for every compact metric space of negative type
\cite[Section~7, after Corollary~7.4]{MeckesCapacity}.
Leinster and Meckes record the same question in
\cite[Section~5, Question~(6)]{LeinsterMeckes}.
The answer is negative in the strongest possible sense.

\begin{theorem}[Magnitude dimension need not equal Minkowski dimension]
\label{thm:infinite-magnitude-dimension}
There is a countable compact metric space \(X\) of negative type such
that
\[
 |tX|<\infty
 \qquad(t>0),
\]
but
\[
 \dim_{\mathrm{Mink}}X=0,
 \qquad
 \underline{\dim}_{\mathrm{Mag}}X
 =
 \overline{\dim}_{\mathrm{Mag}}X
 =+\infty.
\]
\end{theorem}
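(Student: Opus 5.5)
Build \(X\) as a pointed wedge of finite negative-type branches with diameters tending to zero, as in Theorem~\ref{thm:bilipschitz-magnitude-discontinuity}. The branches are chosen so that the magnitude at large scales is large while the covering numbers stay tiny. Concretely, take a sequence of finite spaces \(G_k\) of negative type and put
\[
 X=\ast\vee\bigvee_{k\geq1}r_kG_k ,
\]
with \(r_k\downarrow0\) very rapidly. Compactness, countability and negative type follow exactly as in that proof: branches are finite, diameters tend to zero, and pointed Schoenberg realizations are placed in orthogonal Hilbert summands.

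For the magnitude, use the wedge formula together with the finite-subwedge sup argument. This gives
\[
 |tX|=1+\sum_k\bigl(|tr_kG_k|-1\bigr).
\]
We need this to be finite for every \(t\). The upper bound \(\mu_Y(t)\le e^{tM(Y)}\) from the proof of Theorem~\ref{thm:first-variation} gives
\[
 |tr_kG_k|-1\le e^{tr_kM(G_k)}-1\le C_t\,r_kM(G_k)
\]
once \(tr_kM(G_k)\le1\). So finiteness at every scale follows if \(\sum_kr_kM(G_k)<\infty\), which we arrange by choosing \(r_k\) after \(G_k\).

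For the Minkowski dimension we want \(\#G_k\) to be small compared with \(1/r_k\). Cover all branches with index \(k>K(\rho)\) by the single ball of radius \(\rho\) at the basepoint, where \(K(\rho)\) is the first index with \(r_k\operatorname{diam}G_k\le\rho\). The earlier branches use at most \(\sum_{k\le K}\#G_k\) points. Hence
\[
 N(X,\rho)\le 1+\sum_{k\le K(\rho)}\#G_k .
\]
If \(r_k\) decays superexponentially relative to \(\#G_1+\dots+\#G_k\), then \(\log N(X,\rho)/\log(1/\rho)\to0\).

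For infinite magnitude dimension we need, for each \(t\) in a sequence of scales covering all large \(t\), some branch with \(|tr_kG_k|\) larger than every power of \(t\). The key ingredient is a finite negative-type space \(G\) with \(|sG|\) large at moderate \(s\). A discrete space on \(n\) points with all distances \(1\) gives \(|sG|\to n\), which only grows like the cardinality and leads back to Minkowski dimension. That is not enough, so the large-scale magnitude has to be unbounded relative to cardinality along a range of scales.

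I would try \emph{not} bounding cardinality. Instead, choose \(G_k\) to be an \(n_k\)-point equilateral space with \(n_k\) enormous, say \(n_k=\exp(\exp k)\), and \(r_k=1/\log\log n_k\)-type scales. Then Minkowski counting at scale \(\rho\approx r_k\) sees \(n_k\) points with \(\log n_k/\log(1/r_k)\) huge. That is the wrong direction, so equilateral branches fail.

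The real plan is therefore to use \emph{nested wedges}. A point pair at distance \(r\) contributes \(|t\{0,r\}|=2/(1+e^{-tr})\), so magnitude grows only with point count. Counting thus forces magnitude \(\lesssim N\) at scale \(1/t\) for sets that are genuinely separated. The only escape is the phenomenon of Theorem~\ref{thm:infinite-small-scale}: finite spaces whose magnitude blows up as the scale \(\to0\) (or becomes huge near a critical scale). The plan is then as follows.
\begin{itemize}
 \item[(i)] Use a negative-type (non-strict) finite space \(G\) whose magnitude function, after a small perturbation \(G^{(m)}\), satisfies \(|s_mG^{(m)}|\ge m\) at some scale \(s_m\) while staying finite at all scales. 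The perturbations are of Willerton type, as in the \(Y_k\) of Theorem~\ref{thm:bilipschitz-magnitude-discontinuity}, tuned near singular parameters.
 \item[(ii)] Set \(G_k=G^{(m_k)}\) with fixed cardinality, so that \(N(X,\rho)\) grows only linearly in the number of branches above scale \(\rho\). This gives Minkowski dimension \(0\).
 \item[(iii)] Choose \(r_k=s_{m_k}/t_k\) with \(t_k\) increasing and \(m_k\ge t_k^{k}\), and make the scale windows where \(|tr_kG_k|\) is large overlap, so that the bound holds for all \(t\), not just along \(t_k\).
\end{itemize}

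The main obstacle is step (i): producing, within negative type, finite spaces of bounded cardinality with arbitrarily large magnitude, over a scale window wide enough (a fixed multiplicative window) to control the \(\liminf\). I would first attempt a perturbation of the Willerton space or a scaled wedge near Kamiyama's critical parameter.
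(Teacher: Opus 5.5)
\textbf{Verdict: there is a genuine gap at step (i).} Your wedge framework, the finiteness criterion $\sum_k r_kM(G_k)<\infty$ obtained from $|tY|\le e^{tM(Y)}$, and the covering estimate for $N(X,\rho)$ are all sound. But step (i) carries the entire content of the theorem, you leave it unconstructed, and the candidates you name cannot supply it.

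Kamiyama's critical space from Theorem~\ref{thm:infinite-small-scale} is not of negative type. Take mass-zero vectors $(u_0,v_0)$ with $u_0,v_0\perp\mathbf 1$. On these its distance form is $-r_0\|u_0\|^2-\|v_0\|^2+\frac{5-r_0}{2}\,u_0^TPv_0$, where $P$ is the $6\times6$ circulant with $P_{ij}=1$ iff $j-i\in\{0,1\}$ and $\|P\|_{\mathbf 1^\perp}=\sqrt3$. Since $3\bigl((5-r_0)/4\bigr)^2>r_0$, this form takes positive values. The violation is strict, so it survives small perturbations, and any wedge containing a copy of that space inherits it. On the other side, the perturbed Willerton spaces $Y_\varepsilon$ have uniformly bounded magnitude (at most about $12/11$ at scales $s\le\varepsilon$), so they cannot give $|s_mG^{(m)}|\ge m$ at fixed cardinality. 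You exhibit no negative-type family of bounded cardinality with unbounded magnitude, and nothing in your sketch indicates that one exists. Your plan stands or falls with such a family.

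\textbf{The push to fixed cardinality comes from a misdiagnosis.} Your own covering bound already allows branch cardinalities to grow without limit, provided $\log(1/r_k)$ dominates $\log\sum_{j\le k}\#G_j$. What goes wrong for equilateral branches is not their size but the scale at which their magnitude becomes large. For the $n$-point space $E_n$ with all distances $1$, $|t\,r_kE_n|=n/(1+(n-1)e^{-tr_k})$ is large only once $tr_k\gtrsim\log n$, i.e.\ at the inverse of the branch diameter, so it tracks covering numbers. What you need is a branch whose magnitude is huge at scales far below its inverse diameter, while still not exceeding its cardinality.

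\textbf{How the paper gets such a branch.} It uses $\ell_1$-powers of the perturbed Willerton space, $B_k=(\varepsilon_k/T_k)\,Y_{\varepsilon_k}^{\times_1 m_k}$, so that multiplicativity gives $|tB_k|=|(t\varepsilon_k/T_k)Y_{\varepsilon_k}|^{m_k}$.
\begin{itemize}
\item Lower bound: the failure of the one-point property for $Y$ ($|sY|\to6/5$) yields the uniform bound $|sY_\varepsilon|\ge13/12$ for all $s\ge\varepsilon$. Hence $|tB_k|\ge(13/12)^{m_k}$ for every $t\ge T_k$.
\item Finiteness: the limit $|\lambda\varepsilon Y_\varepsilon|\to6(1+\lambda)/(6+5\lambda)\le1+\lambda/6$, together with $T_k=T_{k-1}m_k^2$, gives $|tB_k|<1+2^{-k}$ for $t\le T_{k-1}$.
\item Decoupling: the onset scale $T_k$ does not depend on $\varepsilon_k$, whereas the radius $r_k\le\varepsilon_km_k(2+\varepsilon_k)/T_k$ does. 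So $\varepsilon_k$ can be shrunk until the Minkowski condition holds, despite $\#B_k=6^{m_k}>(13/12)^{m_k}$.
\item Growth: with $m_k=\lceil e^{k^2}\rceil$ one has $\log T_{k+1}=O(k^3)\ll m_k$. Because the lower bound persists for all $t\ge T_k$, inclusion monotonicity covers every $t\in[T_k,T_{k+1})$ without any overlapping-window argument.
\end{itemize}
These branches also satisfy your criterion: $M(Y_\varepsilon)=1/(6\varepsilon)+O(1)$, so $r_kM(Y_{\varepsilon_k}^{\times_1m_k})\approx1/(6T_{k-1}m_k)$ is summable. Your remaining steps would go through once these branches are inserted.
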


\begin{proof}
Retain the six-point negative-type space \(Y=A_0\sqcup B_0\) from
Theorem~\ref{thm:bilipschitz-magnitude-discontinuity}. For
\(\varepsilon>0\), let \(Y_\varepsilon\) be obtained by adding
\(\varepsilon\) to every nonzero distance in \(Y\), and put
\[
 M_\varepsilon(s)=|sY_\varepsilon|
 \quad(s>0),
 \qquad
 M_\varepsilon(0)=1,
\]
where the value at \(0\) is defined by continuous extension. The space
\(Y_\varepsilon\) has strict negative type.

Set
\[
 q=e^{-s(1+\varepsilon)},
 \qquad
 r=e^{-s(2+\varepsilon)}.
\]
Solving the two orbit equations for the weighting gives
\begin{equation}
\label{eq:perturbed-six-point-magnitude}
 M_\varepsilon(s)
 =
 \frac{N_\varepsilon(s)}{D_\varepsilon(s)},
 \qquad
 N_\varepsilon(s)=6(1+r-2q),
 \qquad
 D_\varepsilon(s)=(1+2q)(1+2r)-9q^2.
\end{equation}
Fix \(L>0\).  With \(s=\lambda\varepsilon\), Taylor's formula with
remainder, uniformly for \(0\leq\lambda\leq L\), gives
\[
 N_\varepsilon(\lambda\varepsilon)
 =\lambda\varepsilon^2
  \bigl(6(1+\lambda)+O_L(\varepsilon)\bigr),
\]
and
\[
 D_\varepsilon(\lambda\varepsilon)
 =\lambda\varepsilon^2
  \bigl(6+5\lambda+O_L(\varepsilon)\bigr).
\]
The factored expressions are understood by continuous extension at
\(\lambda=0\).  Since \(6+5\lambda\geq6\), division now shows that
\begin{equation}
\label{eq:critical-product-limit}
 M_\varepsilon(\lambda\varepsilon)
 \longrightarrow
 Q(\lambda)
 :=
 \frac{6(1+\lambda)}{6+5\lambda}
 \qquad(\varepsilon\downarrow0),
\end{equation}
uniformly for \(\lambda\) in compact subsets of \([0,\infty)\).
Moreover,
\[
 Q(\lambda)-1
 =
 \frac{\lambda}{6+5\lambda}
 \leq\frac{\lambda}{6}.
\]

We next establish the uniform lower bound
\begin{equation}
\label{eq:uniform-product-lower-bound}
 M_\varepsilon(s)\geq\frac{13}{12}
 \qquad
 (0<\varepsilon\leq\varepsilon_0,\ s\geq\varepsilon)
\end{equation}
for some \(\varepsilon_0>0\). We divide the argument into three
ranges.

First suppose that
\[
 \varepsilon\leq s\leq\delta
\]
and write \(u=\varepsilon/s\in[0,1]\).  Taylor's formula, now with
\(\varepsilon=us\), gives
\[
 N_{us}(s)=s^2\bigl(6(1+u)+O(s)\bigr),
 \qquad
 D_{us}(s)=s^2\bigl(5+6u+O(s)\bigr),
\]
uniformly for \(u\in[0,1]\).  Since \(5+6u\geq5\), it follows
uniformly that
\[
 M_{us}(s)
 =
 \frac{6(1+u)}{5+6u}+O(s).
\]
Since
\[
 \min_{0\leq u\leq1}
 \frac{6(1+u)}{5+6u}
 =
 \frac{12}{11}
 >
 \frac{13}{12},
\]
we may choose \(\delta>0\) so small that
\[
 M_\varepsilon(s)\geq\frac{13}{12}
 \qquad
 (\varepsilon\leq s\leq\delta).
\]

Next fix \(R>\delta\). On the compact interval \([\delta,R]\),
formula \eqref{eq:perturbed-six-point-magnitude} converges uniformly,
as \(\varepsilon\downarrow0\), to
\[
 M_0(s)=|sY|=\frac{6}{1+4e^{-s}}.
\]
Since \(M_0(s)\geq6/5>13/12\), there is
\(\varepsilon_1>0\) such that
\[
 M_\varepsilon(s)\geq\frac{13}{12}
 \qquad
 (0<\varepsilon\leq\varepsilon_1,\ \delta\leq s\leq R).
\]

Finally, for \(s\geq R\) one has
\[
 0\leq q\leq e^{-s}\leq e^{-R},
 \qquad
 0\leq r\leq e^{-2s}\leq e^{-2R}.
\]
It follows directly from
\eqref{eq:perturbed-six-point-magnitude} that
\(M_\varepsilon(s)\to6\) as \(R\to\infty\), uniformly in
\(\varepsilon>0\) and \(s\geq R\). Taking \(R\) sufficiently large,
we obtain
\[
 M_\varepsilon(s)\geq\frac{13}{12}
 \qquad(\varepsilon>0,\ s\geq R).
\]
After decreasing \(\varepsilon_1\), if necessary, so that
\(\varepsilon_1\leq\delta\), the choice
\(\varepsilon_0=\varepsilon_1\) proves
\eqref{eq:uniform-product-lower-bound}.

Choose
\[
 m_k=\left\lceil e^{k^2}\right\rceil,
 \qquad
 T_0=1,
 \qquad
 T_k=T_{k-1}m_k^2.
\]
Since \(Q(\lambda)\leq1+\lambda/6\),
\[
 \sup_{0\leq\lambda\leq m_k^{-2}}
 Q(\lambda)^{m_k}
 \leq
 \left(1+\frac{1}{6m_k^2}\right)^{m_k}
 \leq e^{1/(6m_k)}
 <1+2^{-k}.
\]
By the uniform convergence in
\eqref{eq:critical-product-limit}, for each \(k\) there is
\(\delta_k>0\) such that
\begin{equation}
\label{eq:product-tail-threshold}
 \sup_{0\leq\lambda\leq m_k^{-2}}
 M_\varepsilon(\lambda\varepsilon)^{m_k}
 <1+2^{-k}
 \qquad(0<\varepsilon<\delta_k).
\end{equation}
Thus the inequality remains valid whenever \(\varepsilon\) is
decreased while kept below \(\delta_k\).

For \(0<\varepsilon<\min\{\varepsilon_0,\delta_k\}\), let
\[
 P_k(\varepsilon)=Y_\varepsilon^{\times_1m_k},
 \qquad
 B_k(\varepsilon)=\frac{\varepsilon}{T_k}P_k(\varepsilon),
\]
pointed at the corresponding product basepoint, and let
\(r_k(\varepsilon)\) denote the radius of \(B_k(\varepsilon)\) about
that point. Since
\[
 r_k(\varepsilon)
 \leq
 \frac{\varepsilon}{T_k}m_k(2+\varepsilon),
\]
we have \(r_k(\varepsilon)\to0\) as \(\varepsilon\downarrow0\).

Set \(r_0=1\). We may therefore choose \(\varepsilon_k\) recursively
so that
\[
 0<\varepsilon_k
 <
 \min\{\varepsilon_0,\delta_k,\tfrac12\varepsilon_{k-1}\},
\]
and, on writing
\[
 P_k=P_k(\varepsilon_k),
 \qquad
 B_k=B_k(\varepsilon_k),
 \qquad
 r_k=r_k(\varepsilon_k),
\]
so that \(r_k<r_{k-1}\) and
\begin{equation}
\label{eq:branch-radius-condition}
 \log(1/r_k)
 \geq
 k^2
 \log\left(
 1+\sum_{j\leq k}(6^{m_j}-1)
 \right).
\end{equation}
By \eqref{eq:product-tail-threshold}, these choices also satisfy
\begin{equation}
\label{eq:product-tail-bound}
 \sup_{0\leq\lambda\leq m_k^{-2}}
 M_{\varepsilon_k}(\lambda\varepsilon_k)^{m_k}
 <1+2^{-k}.
\end{equation}

Now let
\[
 X=\bigvee_{k\geq1}B_k.
\]
Products and wedges preserve negative type, so \(X\) is of negative
type. Each branch is finite, with
\[
 \#B_k=6^{m_k},
\]
and hence \(X\) is countable. It is also compact. Indeed, given a
sequence in \(X\), either one branch contains infinitely many of its
terms, in which case it has a constant subsequence, or there is a
subsequence whose branch indices tend to infinity. The latter
subsequence converges to the common basepoint because \(r_k\to0\).

We next prove that \(X\) has finite magnitude at every scale. By
multiplicativity for \(\ell_1\)-products,
\begin{equation}
\label{eq:scaled-branch-magnitude}
 |tB_k|
 =
 M_{\varepsilon_k}
 \left(\frac{t\varepsilon_k}{T_k}\right)^{m_k}.
\end{equation}
Fix \(t>0\). For every sufficiently large \(k\), \(t\leq T_{k-1}\);
hence
\[
 0\leq\frac{t}{T_k}\leq\frac1{m_k^2}.
\]
Equations \eqref{eq:product-tail-bound} and
\eqref{eq:scaled-branch-magnitude} then give
\[
 1\leq |tB_k|<1+2^{-k}.
\]
Consequently
\[
 \sum_{k\geq1}\bigl(|tB_k|-1\bigr)<\infty.
\]

For \(N\geq1\), let
\[
 X_N=\bigvee_{k=1}^{N}B_k.
\]
The finite wedge formula gives
\[
 |tX_N|
 =
 1+\sum_{k=1}^{N}\bigl(|tB_k|-1\bigr).
\]
Since \(X_N\subseteq X\), inclusion monotonicity gives
\[
 1+\sum_{k\geq1}\bigl(|tB_k|-1\bigr)
 \leq |tX|.
\]
Conversely, every finite subset \(F\subseteq X\) lies in some
\(X_N\), and hence
\[
 |tF|
 \leq |tX_N|
 \leq
 1+\sum_{k\geq1}\bigl(|tB_k|-1\bigr).
\]
Taking the supremum over all finite \(F\subseteq X\) proves
\begin{equation}
\label{eq:infinite-wedge-magnitude}
 |tX|
 =
 1+\sum_{k\geq1}\bigl(|tB_k|-1\bigr)
 <\infty.
\end{equation}

We now compute the Minkowski dimension. If
\[
 r_{k+1}\leq\rho<r_k,
\]
then the tail \(\bigvee_{j>k}B_j\) lies in the closed
\(\rho\)-ball about the basepoint. Covering each remaining point
separately gives
\[
 N(X,\rho)
 \leq
 1+\sum_{j\leq k}(6^{m_j}-1).
\]
Since \(\rho<r_k\), condition
\eqref{eq:branch-radius-condition} implies
\[
 \frac{\log N(X,\rho)}{\log(1/\rho)}
 \leq
 \frac{
 \log\left(1+\sum_{j\leq k}(6^{m_j}-1)\right)}
 {\log(1/r_k)}
 \leq\frac1{k^2}.
\]
It follows that
\[
 \dim_{\mathrm{Mink}}X=0.
\]

Finally, suppose that \(T_k\leq t<T_{k+1}\). Then
\[
 \frac{t\varepsilon_k}{T_k}\geq\varepsilon_k,
\]
so \eqref{eq:uniform-product-lower-bound},
\eqref{eq:scaled-branch-magnitude}, and inclusion monotonicity give
\[
 |tX|
 \geq|tB_k|
 \geq\left(\frac{13}{12}\right)^{m_k}.
\]
Moreover,
\[
 \log T_{k+1}
 =
 2\sum_{j=1}^{k+1}\log m_j
 =
 O(k^3),
 \qquad
 m_k\geq e^{k^2}.
\]
Therefore, whenever \(T_k\leq t<T_{k+1}\),
\[
 \frac{\log|tX|}{\log t}
 \geq
 \frac{m_k\log(13/12)}{\log T_{k+1}}
 \longrightarrow+\infty.
\]
Thus
\[
 \underline{\dim}_{\mathrm{Mag}}X
 =
 \overline{\dim}_{\mathrm{Mag}}X
 =+\infty,
\]
as required.
\end{proof}

\section{Acknowledgements}
I thank Professor Kiyonori Gomi and Mark W. Meckes for analysis of the fourteen-point counterexample described in Theorem 4.1, valuable communications about several of the results in here, and for suggestions greatly improving the formatting and structure of this article.

\section{AI Use Disclosure}
Earlier in the research process, GPT 5.6 Sol was used in surveying literature to find necessary background work to read.

\end{document}